\newtheorem{theorem}{Theorem}[section]
\newtheorem{corollary}[theorem]{Corollary}
\newtheorem{lemma}[theorem]{Lemma}
\newtheorem{proposition}[theorem]{Proposition}
\newtheorem{definition}[theorem]{Definition}
\newtheorem{problem}{Problem}
\newtheorem*{claim}{Claim}
\newtheorem*{remark}{Remark}
\newtheorem*{note}{Note}
\renewcommand{\and}{\text{ and }}
\newcommand{\BC}{\begin{center}}
\newcommand{\EC}{\end{center}}
\newcommand{\BCS}{\begin{cases}}
\newcommand{\ECS}{\end{cases}}
\newcommand{\BM}{\begin{pmatrix}}
\newcommand{\EM}{\end{pmatrix}}
\newcommand{\BD}{\begin{tikzcd}}
\newcommand{\ED}{\end{tikzcd}}
\newcommand{\BE}{\begin{equation}}
\newcommand{\EE}{\end{equation}}
\newcommand{\thm}{\begin{theorem}}
\newcommand{\ethm}{\end{theorem}}
\newcommand{\prf}{\begin{proof}}
\newcommand{\eprf}{\end{proof}}
\newcommand{\cor}{\begin{corollary}}
\newcommand{\ecor}{\end{corollary}}
\newcommand{\prop}{\begin{proposition}}
\newcommand{\eprop}{\end{proposition}}
\newcommand{\prob}[1]{\begin{problem}[#1]}
\newcommand{\eprob}{\end{problem}}
\newcommand{\lem}{\begin{lemma}} 
\newcommand{\elem}{\end{lemma}}
\newcommand{\defi}{\begin{definition}}
\newcommand{\edefi}{\end{definition}}
\newcommand{\clm}{\begin{claim}}
\newcommand{\eclm}{\end{claim}}
\newcommand{\rem}{\begin{remark}}
\newcommand{\erem}{\end{remark}}
\newcommand{\nt}{\begin{note}}
\newcommand{\ent}{\end{note}}
\newcommand{\renumerate}{\begin{enumerate}[label=(\roman*)]}
\newcommand{\eenumerate}{\end{enumerate}}
\newcommand{\lenumerate}{\begin{enumerate}[label=(\alph*)]}
\newcommand{\nenumerate}{\begin{enumerate}[label=(\arabic*)]}
\begin{document}

\title{\bf
The $q$-Onsager algebra and the \\ quantum torus
}
\author{
Owen Goff}
\date{}
\maketitle
\begin{abstract} The $q$-Onsager algebra, denoted $O_q$, is defined by two generators $W_0, W_1$ and two relations called the $q$-Dolan-Grady relations. 
Recently, Terwilliger introduced some elements of $O_q$, said to be alternating. These elements are denoted
\begin{equation} \label{intrp332}
 \{{W}_{-k}\}_{k=0}^{\infty}, \qquad
\{{W}_{k+1}\}_{k=0}^{\infty}, \qquad  \{{G}_{k+1}\}_{k=0}^{\infty}, \qquad \{{\tilde{G}}_{k+1}\}_{k=0}^{\infty}.  \nonumber
\end{equation}

The alternating elements of $O_q$ are defined recursively. By construction, they are polynomials in $W_0$ and $W_1$. It is currently unknown how to express these polynomials in closed form.

\medskip
In this paper, we consider an algebra $T_q$, called the \text{quantum torus}. We present a basis for $T_q$ and define an algebra homomorphism $p: O_q \mapsto T_q$. In our main result, we express the $p$-images of the alternating elements of $O_q$ in the basis for $T_q$. These expressions are in a closed form that we find attractive.
\bigskip

\noindent
{\bf Keywords}. $q$-Onsager algebra; $q$-Dolan-Grady relations; quantum torus; alternating central extension.
\hfil\break
\noindent {\bf 2020 Mathematics Subject Classification}.
Primary: 17B37.
Secondary: 05E10, 16T20.
 \end{abstract}

\section{Introduction}

  The $q$-Onsager algebra, which we denote $O_q$, first appeared in the topic of algebraic graph theory \cite[Lemma 5.4]{assocscheme}. Applications of $O_q$ in algebraic combinatorics appear in many papers, including \cite{xxy, IAT, 1865045}. The algebra $O_q$ has recently been applied to statistical mechanics \cite{BB, in, Source16, ddg, XX} 
 and quantum groups \cite[Section 2.2]{p}.  The algebra $O_q$ is defined by generators $W_0, W_1$ and two relations called the $q$-Dolan-Grady relations.

  \medskip 
  
  We mentioned that $O_q$ appears in statistical mechanics; let us expand on that. In \cite[eqn. 4]{in}, Pascal Baseilhac and Kozo Koizumi described an algebra similar to $O_q$ while researching operators on boundary integrable systems with
 hidden symmetries. Following \cite{T2}, we denote this algebra by $\mathcal O_q$. This algebra is defined by generators and relations. The generators are \begin{equation} \label{intrp}
 \{\mathcal{W}_{-k}\}_{k=0}^{\infty}, \qquad
\{\mathcal{W}_{k+1}\}_{k=0}^{\infty}, \qquad  \{\mathcal{G}_{k+1}\}_{k=0}^{\infty}, \qquad \{\mathcal{\tilde{G}}_{k+1}\}_{k=0}^{\infty}. 
\end{equation}
The relations are given in \cite[Definition 3.1]{Source16}, as well as Definition \ref{note} below. The name and notation for $\mathcal O_q$ have evolved over time; it is currently known as the alternating central extension of the $q$-Onsager algebra, as in \cite{T3}. The elements displayed in equation \eqref{intrp} are called the alternating generators of $\mathcal O_q$; see \cite{T3} and \cite{T2}.

\medskip 

Motivated by \cite{in}, Paul Terwilliger showed in \cite{T3} that there exists an algebra isomorphism from $\mathcal O_q$ to the tensor product of $O_q$ and the polynomial algebra over countably many commuting variables. We denote this isomorphism by $\varphi$, and we denote the variables by $\{z_i\}_{i=1}^{\infty}$. For $i \ge 1$, let $\mathcal Z_i \in \mathcal O_q$ denote the $\varphi$-preimage of $1 \otimes z_i$. By \cite[Definition 11.2]{T2}, there exists an algebra homomorphism $\gamma: \mathcal O_q \mapsto O_q$ that sends $\mathcal W_0 \mapsto W_0$, $\mathcal W_1 \mapsto W_1$, and $\mathcal Z_i \mapsto 0$ for $i \ge 1$. 

\medskip

The $\gamma$-images of the alternating generators of $\mathcal O_q$ are called the alternating elements of $O_q$. By construction, each alternating element of $O_q$ is a polynomial in $W_0$ and $W_1$. Recursive formulas for these polynomials are known; see \cite[Lemma 8.22]{T3}. However, closed forms for the polynomials are not currently known. 
\medskip 

In this paper, we discuss an algebra $T_q$, called the {quantum torus} \cite{Magic}. The algebra $T_q$ is defined by the generators $x$, $y$, $x^{-1}$, $y^{-1}$ and the relations $$xx^{-1} = 1 = x^{-1}x, \qquad yy^{-1} = 1 = y^{-1}y, \qquad xy=q^2yx.$$ It is known that the elements $\{x^iy^j\vert i, j \in \mathbb{Z}\}$ form a basis for $T_q$; see \cite[p. 3]{Magic}.
\medskip

In this paper, we introduce the algebra homomorphism $p: O_q \rightarrow T_q$ that sends $W_0 \mapsto x+x^{-1}$ and $W_1 \mapsto y+y^{-1}$. We consider the $p$-images of the alternating elements of $O_q$. Our main objective is to express these $p$-images in closed form in the above basis. To reach our objective, we make use of generating functions. We give two versions of our main result: one expressed using generating functions and one without them. The main results of this paper are Theorem \ref{hartshorne} and Theorem \ref{halfmarathon}.

\medskip 

The paper is organized as follows. In Sections 3--5, we give definitions and results concerning $\mathcal O_q$, $O_q$, and $T_q$. Section 6 covers generating functions, and in Section 7 we provide notation and formulas. In Section 8 we state and prove our main result in terms of generating functions. In Section 9 we present our main result without using generating functions.

\section{Notations}

In this section, we introduce some notation that will be used throughout the paper.
\medskip

\begin{itemize}

\item Let ${K}$ be a field of characteristic $0$.

\item All algebras in this paper are associative, unital, and over $K$.

\item For $u,v$ in any algebra, define $[u,v]=uv-vu$. We call $[u,v]$ the \textit{commutator} of $u$ and $v$. For nonzero $r \in K$, define $[u,v]_r=ruv-r^{-1}vu$. We call $[u,v]_r$ the \textit{r-commutator} of $u$ and $v$.

\item Fix nonzero $q \in K$.



\item Let $\mathbb{N}$ denote the set of natural numbers $\{0, 1, 2, \ldots \}$. Let $\mathbb{Z}$ denote the set of integers. Let $\mathbb{C}$ denote the set of complex numbers.

\item For $n\in \mathbb{N}$, define $[n]_q=\frac{q^n-q^{-n}}{q-q^{-1}}$. We are using the notation from \cite{T1}.

\item By an \textit{automorphism} of an algebra $\mathcal A$, we mean an algebra isomorphism from $\mathcal A$ to itself.

\item The \textit{opposite algebra} of an algebra $\mathcal A$, denoted $\mathcal A^{op}$, is the algebra on the vector space $\mathcal A$ such that for $a,b \in \mathcal A^{op}$, $ab$ (in $\mathcal A^{op}$) $=ba$ (in $\mathcal A$). Note that if $\mathcal A$ is commutative, then $\mathcal A = \mathcal A^{op}$.

\item By an \textit{antiautomorphism} of an algebra $\mathcal{A}$, we mean an algebra isomorphism from $\mathcal A$ to $\mathcal A^{op}$. In other words, an antiautomorphism of $\mathcal A$ is a $K$-linear bijection $\alpha:\mathcal{A} \rightarrow \mathcal{A}$ that reverses the order of multiplication: $\alpha(ab) = \alpha(b)\alpha(a)$ for all $a,b \in \mathcal A$.

\end{itemize}
\medskip






\section{The $q$-Onsager algebra}

In this section, we consider the $q$-Onsager algebra and its alternating central extension.

\begin{definition}
\rm
Let $O_q$ denote the algebra defined by generators $W_0$, $W_1$ and relations
\begin{align}
[{W}_0,[{W}_0,[{W}_0,{W}_1]_q]_{q^{-1}}] &= -(q^2-q^{-2})^2[{W}_0,{W}_1], \label{eq:2p1a} \\ 
[{W}_1,[{W}_1,[{W}_1,{W}_0]_q]_{q^{-1}}] &= -(q^2-q^{-2})^2[{W}_1,{W}_0].
\label{eq:2p2a} 
\end{align}
We call $O_q$ the $q$-\textit{Onsager algebra}.
\end{definition}

\medskip

Equations \eqref{eq:2p1a} and \eqref{eq:2p2a} are called the $q$-\textit{Dolan-Grady relations.} We define an automorphism and an antiautomorphism of $O_q$.

\begin{lemma}  {\rm (See \cite[Lemma 3.3]{T2}.)} \label{molybdenum}
There exists an automorphism $\sigma$ of $O_q$ that sends
$$W_0 \mapsto W_1, \qquad \qquad W_1 \mapsto W_0.$$

\end{lemma}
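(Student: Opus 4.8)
The plan is to exploit the symmetry of the defining relations of $O_q$. Since $O_q$ is presented by generators and relations, to produce an automorphism $\sigma$ it suffices to check that the assignment $W_0 \mapsto W_1$, $W_1 \mapsto W_0$ on generators respects the two $q$-Dolan-Grady relations \eqref{eq:2p1a} and \eqref{eq:2p2a}; by the universal property of an algebra presented by generators and relations, a map on generators that sends each relation to a valid relation (i.e.\ to something that holds in $O_q$) extends uniquely to an algebra homomorphism $\sigma: O_q \to O_q$.

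First I would observe that relations \eqref{eq:2p1a} and \eqref{eq:2p2a} are interchanged by swapping the roles of $W_0$ and $W_1$: applying the substitution $W_0 \leftrightarrow W_1$ to \eqref{eq:2p1a} yields exactly \eqref{eq:2p2a}, and vice versa. Concretely, under this swap the left-hand side $[W_0,[W_0,[W_0,W_1]_q]_{q^{-1}}]$ becomes $[W_1,[W_1,[W_1,W_0]_q]_{q^{-1}}]$, and the right-hand side $-(q^2-q^{-2})^2[W_0,W_1]$ becomes $-(q^2-q^{-2})^2[W_1,W_0]$. Hence the swap carries the defining ideal of relations to itself, so $\sigma$ is a well-defined algebra endomorphism of $O_q$.

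Next I would check that $\sigma$ is in fact an automorphism. The cleanest route is to note that the same construction, applied a second time, gives an endomorphism $\sigma'$ with $W_0 \mapsto W_1 \mapsto W_0$ and $W_1 \mapsto W_0 \mapsto W_1$; that is, $\sigma \circ \sigma$ agrees with the identity on the generators $W_0, W_1$. Since $W_0$ and $W_1$ generate $O_q$, this forces $\sigma \circ \sigma = \mathrm{id}_{O_q}$, so $\sigma$ is its own two-sided inverse and is therefore bijective. This establishes that $\sigma$ is an automorphism.

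I expect no serious obstacle here: the entire argument rests on the fact that the two defining relations are manifestly symmetric under interchanging the indices $0$ and $1$, so the only content is the routine verification that the swap permutes the relations. The one point warranting a moment of care is the direction in which the parameters $q$ and $q^{-1}$ appear inside the nested $r$-commutators; one should confirm that the inner $q$ and $q^{-1}$ in \eqref{eq:2p1a} map to the corresponding $q$ and $q^{-1}$ in \eqref{eq:2p2a} under the swap, which they do since the relations were written with the same bracket pattern. Given that, the well-definedness and the involutivity both follow immediately, and the lemma is proved.
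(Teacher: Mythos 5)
Your argument is correct and is exactly the standard one: the paper itself gives no proof, deferring to \cite[Lemma~3.3]{T2}, and the proof there proceeds just as you do, by noting that the swap $W_0 \leftrightarrow W_1$ interchanges the two $q$-Dolan-Grady relations (so the assignment extends to an endomorphism) and that the resulting map squares to the identity on generators, hence is an involutive automorphism. Nothing is missing.
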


\begin{lemma} {\rm (See \cite[Lemma 3.4]{T2}.)} \label{technetium}
 There exists an antiautomorphism $\dagger$ of $O_q$ that sends
$$W_0 \mapsto W_0, \qquad \qquad W_1 \mapsto W_1.$$
\end{lemma}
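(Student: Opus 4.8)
The plan is to realize $\dagger$ through the presentation of $O_q$. Write $O_q = \mathbb{A}/I$, where $\mathbb{A}$ is the free associative unital $K$-algebra on $W_0, W_1$ and $I$ is the two-sided ideal generated by the two elements $R_0$ and $R_1$ obtained by moving all terms of \eqref{eq:2p1a} and \eqref{eq:2p2a} to one side. On $\mathbb{A}$ there is an evident involutive antiautomorphism $\dagger_0$, namely word reversal $W_{i_1}W_{i_2}\cdots W_{i_n} \mapsto W_{i_n}\cdots W_{i_2}W_{i_1}$, which fixes $W_0$ and $W_1$. To obtain $\dagger$ it suffices to show that $\dagger_0$ preserves $I$; then $\dagger_0$ descends to the quotient, and since $\dagger_0^2 = \mathrm{id}$ the descended map is a bijection, hence an antiautomorphism of $O_q$ fixing both generators.

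So the task reduces to checking $\dagger_0(I) \subseteq I$. Because $I$ is generated as a two-sided ideal by $R_0$ and $R_1$, and $\dagger_0$ is an antiautomorphism of $\mathbb{A}$, it is enough to prove $\dagger_0(R_0), \dagger_0(R_1) \in I$. First I would record how $\dagger_0$ interacts with commutators: for $u,v \in \mathbb{A}$ one has $\dagger_0([u,v]_r) = [\dagger_0(v), \dagger_0(u)]_r$ and $\dagger_0([u,v]) = -[\dagger_0(u),\dagger_0(v)]$, both immediate from $\dagger_0(uv)=\dagger_0(v)\dagger_0(u)$ and $K$-linearity.

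The computation then proceeds as follows. Expanding the left side of \eqref{eq:2p1a} produces a combination of the four length-four words having three factors $W_0$ and one factor $W_1$; read in the order $W_0^3W_1,\ W_0^2W_1W_0,\ W_0W_1W_0^2,\ W_1W_0^3$, the coefficients are $1,\,-c,\,c,\,-1$ with $c = q^2+q^{-2}+1$, an anti-palindromic pattern. Word reversal merely reflects the position of the single $W_1$, interchanging the first word with the last and the second with the third; combined with the anti-palindromic coefficients this multiplies the entire left side by $-1$. On the right side, the commutator identity above sends $(q^2-q^{-2})^2[W_0,W_1]$ to $-(q^2-q^{-2})^2[W_0,W_1]$. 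Hence $\dagger_0(R_0) = -R_0 \in I$, and the same argument with the roles of $W_0$ and $W_1$ interchanged gives $\dagger_0(R_1) = -R_1 \in I$. This establishes $\dagger_0(I)\subseteq I$ and completes the argument.

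I expect the only delicate point to be the bookkeeping in the expansion of the nested commutators in \eqref{eq:2p1a}; once the anti-palindromic coefficient pattern is exhibited, invariance of $I$ under reversal is immediate and no genuine obstacle remains. An equivalent route, which I would note as an alternative, is to invoke the universal property of the presentation directly: define an algebra homomorphism $O_q \to O_q^{op}$ by $W_0 \mapsto W_0$ and $W_1 \mapsto W_1$, for which one must verify that these images satisfy the $q$-Dolan--Grady relations computed with the multiplication of $O_q^{op}$. That verification is precisely the same reversal computation, and since the resulting map is a bijection it is the desired antiautomorphism $\dagger$.
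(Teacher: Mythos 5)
Your proposal is correct: the anti-palindromic coefficient pattern $1,\,-c,\,c,\,-1$ with $c=q^2+q^{-2}+1$ is exactly what the expansion of $[W_0,[W_0,[W_0,W_1]_q]_{q^{-1}}]$ gives, so word reversal sends each defining relation to $-1$ times itself and the ideal is preserved. The paper offers no proof of its own here --- it simply cites \cite[Lemma 3.4]{T2} --- and your argument is the standard one that citation points to (verifying the relations in $O_q^{op}$ and noting the map is involutive), so there is nothing to fault.
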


\begin{lemma}
With reference to Lemmas \ref{molybdenum} and \ref{technetium}, we have $\sigma \circ \dagger = \dagger \circ \sigma$ in $O_q$. That is to say, this diagram commutes:
\begin{center}
\begin{tikzcd}
O_q \arrow[r,"\dagger"] \arrow[d,"\sigma"'] & O_q \arrow[d,"\sigma"] \\
O_q \arrow[r,"\dagger"'] &  O_q 
\end{tikzcd}
\end{center}
\end{lemma}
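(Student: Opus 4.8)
The plan is to show that the two composites $\sigma \circ \dagger$ and $\dagger \circ \sigma$ are both antiautomorphisms of $O_q$ and that they agree on the generators $W_0, W_1$. Since $O_q$ is generated by $W_0$ and $W_1$, this forces the two maps to coincide. The first step I would take is to record the general principle that the composite of an automorphism and an antiautomorphism is again an antiautomorphism: for $a,b \in O_q$ we have $(\sigma \circ \dagger)(ab) = \sigma\bigl(\dagger(b)\,\dagger(a)\bigr) = \sigma(\dagger(b))\,\sigma(\dagger(a)) = (\sigma \circ \dagger)(b)\,(\sigma \circ \dagger)(a)$, using that $\dagger$ reverses the order of multiplication (Lemma \ref{technetium}) while $\sigma$ preserves it (Lemma \ref{molybdenum}); the identical computation applies to $\dagger \circ \sigma$. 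As each composite is also a $K$-linear bijection, both are antiautomorphisms of $O_q$.

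Next I would evaluate the two composites on the generators. Using $\dagger(W_0)=W_0$, $\dagger(W_1)=W_1$, $\sigma(W_0)=W_1$, and $\sigma(W_1)=W_0$, I would compute
$$(\sigma \circ \dagger)(W_0) = \sigma(W_0) = W_1, \qquad (\sigma \circ \dagger)(W_1) = \sigma(W_1) = W_0,$$
$$(\dagger \circ \sigma)(W_0) = \dagger(W_1) = W_1, \qquad (\dagger \circ \sigma)(W_1) = \dagger(W_0) = W_0.$$
Thus both composites send $W_0 \mapsto W_1$ and $W_1 \mapsto W_0$; in particular they agree on the generating set.

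Finally I would invoke the uniqueness principle that an antiautomorphism of $O_q$ is determined by its action on $W_0$ and $W_1$. Concretely, every element of $O_q$ is a $K$-linear combination of words $W_{i_1} W_{i_2} \cdots W_{i_n}$ in the generators, and an antiautomorphism $f$ sends such a word to $f(W_{i_n}) \cdots f(W_{i_1})$, so its value is completely pinned down by $f(W_0)$ and $f(W_1)$. Since $\sigma \circ \dagger$ and $\dagger \circ \sigma$ are antiautomorphisms agreeing on the generators, they agree on every word, and hence by $K$-linearity on all of $O_q$, giving $\sigma \circ \dagger = \dagger \circ \sigma$. I do not expect any serious obstacle here: the result is essentially formal. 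The only points requiring mild care are the bookkeeping of the order reversal when $\sigma$ and $\dagger$ are composed in the two different orders, and the observation that matching on generators suffices precisely because both composites are antiautomorphisms of the same type (so that the reversed word $f(W_{i_n})\cdots f(W_{i_1})$ is computed the same way for each).
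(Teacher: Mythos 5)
Your proposal is correct and follows essentially the same route as the paper, whose entire proof is ``Chase $W_0$ and $W_1$ around the diagram''; you have simply made explicit the implicit justification that both composites are antiautomorphisms and hence determined by their values on the generators. No gaps.
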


\begin{proof}
Chase $W_0$ and $W_1$ around the diagram.
\end{proof}

We recall the \text{alternating central extension} of $O_q$.


\begin{definition}\label{note}{\rm (See \cite[Definition 3.1]{Source16})} \rm  Define the algebra $\mathcal{O}_q$ by the generators 
\begin{equation} \label{e}
 \{\mathcal{W}_{-k}\}_{k=0}^{\infty}, \qquad
\{\mathcal{W}_{k+1}\}_{k=0}^{\infty}, \qquad  \{\mathcal{G}_{k+1}\}_{k=0}^{\infty}, \qquad \{\mathcal{\tilde{G}}_{k+1}\}_{k=0}^{\infty}, 
\end{equation}
and the following relations.  For $k,\ell \in \mathbb{N}$,
\begin{align}
&
 \lbrack \mathcal W_0, \mathcal W_{k+1}\rbrack= 
\lbrack \mathcal W_{-k}, \mathcal W_{1}\rbrack=
({\mathcal{\tilde G}}_{k+1} - \mathcal G_{k+1})/(q+q^{-1}),
\label{eq:3p1}
\\
&
\lbrack \mathcal W_0, \mathcal G_{k+1}\rbrack_q= 
\lbrack {\mathcal{\tilde G}}_{k+1}, \mathcal W_{0}\rbrack_q= 
\rho  \mathcal W_{-k-1}-\rho 
 \mathcal W_{k+1},
\label{eq:3p2}
\\
&
\lbrack \mathcal G_{k+1}, \mathcal W_{1}\rbrack_q= 
\lbrack \mathcal W_{1}, {\mathcal {\tilde G}}_{k+1}\rbrack_q= 
\rho  \mathcal W_{k+2}-\rho 
 \mathcal W_{-k},
\label{eq:3p3}
\\
&
\lbrack \mathcal W_{-k}, \mathcal W_{-\ell}\rbrack=0,
\label{eq:3p31}
\\
&
\lbrack \mathcal W_{k+1}, \mathcal W_{\ell+1}\rbrack= 0,
\label{eq:3p4}
\\
&
\lbrack \mathcal W_{-k}, \mathcal W_{\ell+1}\rbrack+
\lbrack \mathcal W_{k+1}, \mathcal W_{-\ell}\rbrack= 0,
\label{eq:3p5}
\\
&
\lbrack \mathcal W_{-k}, \mathcal G_{\ell+1}\rbrack+
\lbrack \mathcal G_{k+1}, \mathcal W_{-\ell}\rbrack= 0,
\label{eq:3p6}
\\
&
\lbrack \mathcal W_{-k}, {\mathcal {\tilde G}}_{\ell+1}\rbrack+
\lbrack {\mathcal {\tilde G}}_{k+1}, \mathcal W_{-\ell}\rbrack= 0,
\label{eq:3p7}
\\
&
\lbrack \mathcal W_{k+1}, \mathcal G_{\ell+1}\rbrack+
\lbrack \mathcal  G_{k+1}, \mathcal W_{\ell+1}\rbrack= 0,
\label{eq:3p8}
\\
&
\lbrack \mathcal W_{k+1}, {\mathcal {\tilde G}}_{\ell+1}\rbrack+
\lbrack {\mathcal {\tilde G}}_{k+1}, \mathcal W_{\ell+1}\rbrack= 0,
\label{eq:3p9}
\\
&
\lbrack \mathcal G_{k+1}, \mathcal G_{\ell+1}\rbrack=0,
\label{eq:3p32}
\\
&
\lbrack {\mathcal {\tilde G}}_{k+1}, {\mathcal {\tilde G}}_{\ell+1}\rbrack= 0,
\label{eq:3p10}
\\
&
\lbrack {\mathcal {\tilde G}}_{k+1}, \mathcal G_{\ell+1}\rbrack+
\lbrack \mathcal G_{k+1}, {\mathcal {\tilde G}}_{\ell+1}\rbrack= 0.
\label{eq:3p11}
\end{align}

In these equations, $\rho = -(q^2-q^{-2})^2$.

\medskip
The algebra $\mathcal O_q$ is called the \textit{alternating central extension} of $O_q$. The elements in \eqref{e} are called the \textit{alternating generators} of $\mathcal{O}_q$.
\end{definition}
For notational convenience, we define 
$$\mathcal{G}_0=-(q-q^{-1})(q+q^{-1})^2, \qquad \tilde{\mathcal{G}}_0=-(q-q^{-1})(q+q^{-1})^2.$$ 

For the remainder of this section, we explain how $O_q$ and $\mathcal{O}_q$ are related.

\begin{proposition}{\rm (See \cite[Section~3]{BB}.)}
In $\mathcal{O}_q$, we have
\begin{align}
[\mathcal{W}_0,[\mathcal{W}_0,[\mathcal{W}_0,\mathcal{W}_1]_q]_{q^{-1}}] &= -(q^2-q^{-2})^2[\mathcal{W}_0,\mathcal{W}_1], \nonumber \\
[{\mathcal W}_1,[{\mathcal W}_1,[{\mathcal W}_1,{\mathcal W}_0]_q]_{q^{-1}}] &= -(q^2-q^{-2})^2[{\mathcal W}_1,{\mathcal W}_0]. \nonumber 
\end{align}
\end{proposition}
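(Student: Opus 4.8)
The plan is to verify the first relation by a direct computation from the defining relations of $\mathcal O_q$ in Definition \ref{note}, specializing the index parameters to $k=\ell=0$; the second relation will then follow by an entirely symmetric argument. Throughout, write $\rho=-(q^2-q^{-2})^2$ and abbreviate $C=[\mathcal W_0,\mathcal W_1]$. The opening move is purely formal: expanding the nested $q$-commutator and collecting terms, one checks the identity
\[
[\mathcal W_0,[\mathcal W_0,[\mathcal W_0,\mathcal W_1]_q]_{q^{-1}}]=\mathcal W_0^2C+C\mathcal W_0^2-(q^2+q^{-2})\mathcal W_0C\mathcal W_0 ,
\]
in which all dependence on the separate $q$-weights has collapsed into the single symmetric combination $q^2+q^{-2}$ and $C$ is now the ordinary commutator. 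The right-hand side is exactly the symmetric iterated bracket $[\mathcal W_0,[\mathcal W_0,C]_q]_{q^{-1}}$.

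Next I would use relation \eqref{eq:3p1} at $k=0$ to substitute $C=(\tilde{\mathcal G}_1-\mathcal G_1)/(q+q^{-1})$, and then split the result, by bilinearity, into a $\tilde{\mathcal G}_1$-part and a $\mathcal G_1$-part. Here the one genuinely delicate point enters: relation \eqref{eq:3p2} presents $\mathcal G_1$ through $[\mathcal W_0,\mathcal G_1]_q$, whereas it presents $\tilde{\mathcal G}_1$ through $[\tilde{\mathcal G}_1,\mathcal W_0]_q$, that is, with $\mathcal W_0$ in the opposite slot of the $q$-bracket. The device that resolves this is the elementary observation that the symmetric expression $\mathcal W_0^2 b+b\mathcal W_0^2-(q^2+q^{-2})\mathcal W_0 b\mathcal W_0$ can be written either as $[\mathcal W_0,[\mathcal W_0,b]_q]_{q^{-1}}$ or as $[[b,\mathcal W_0]_q,\mathcal W_0]_{q^{-1}}$. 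I would therefore present the $\mathcal G_1$-part in the first form and the $\tilde{\mathcal G}_1$-part in the second, so that each matches the orientation in which \eqref{eq:3p2} supplies its value.

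Applying \eqref{eq:3p2} (both of its equalities equal $\rho(\mathcal W_{-1}-\mathcal W_1)$) and writing $V=\mathcal W_{-1}-\mathcal W_1$, the whole expression reduces to
\[
\frac{\rho}{q+q^{-1}}\bigl([V,\mathcal W_0]_{q^{-1}}-[\mathcal W_0,V]_{q^{-1}}\bigr).
\]
A one-line computation gives the antisymmetrization identity $[V,\mathcal W_0]_{q^{-1}}-[\mathcal W_0,V]_{q^{-1}}=(q+q^{-1})[V,\mathcal W_0]$, so the prefactor cancels and the expression becomes $\rho[V,\mathcal W_0]=\rho[\mathcal W_{-1}-\mathcal W_1,\mathcal W_0]$. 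Finally $[\mathcal W_{-1},\mathcal W_0]=0$ by \eqref{eq:3p31}, which leaves $\rho[\mathcal W_0,\mathcal W_1]=-(q^2-q^{-2})^2[\mathcal W_0,\mathcal W_1]$, as required. For the second relation the identical scheme applies with the roles of $\mathcal W_0$ and $\mathcal W_1$ interchanged: one uses \eqref{eq:3p1} again, then \eqref{eq:3p3} in place of \eqref{eq:3p2} (which produces $\rho(\mathcal W_2-\mathcal W_0)$), and finally \eqref{eq:3p4} to discard the $\mathcal W_2$ term.

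The computation is short, and the only real obstacle is bookkeeping rather than insight. One must (i) carry out the opening expansion carefully enough to see the $q$-weights collapse into $q^2+q^{-2}$ and $C$ emerge, and (ii) handle the orientation mismatch between the two halves of \eqref{eq:3p2} by choosing the correct one of the two equivalent forms of the iterated bracket for $\mathcal G_1$ versus $\tilde{\mathcal G}_1$. Once those two points are arranged, the relations \eqref{eq:3p1}, \eqref{eq:3p2}, and \eqref{eq:3p31} do all the remaining work.
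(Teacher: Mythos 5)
Your computation is correct, and it is worth noting that the paper itself offers no proof of this proposition at all: it simply cites \cite[Section~3]{BB}. Your argument is therefore a genuine, self-contained derivation from the defining relations of $\mathcal O_q$, which is more than the text supplies. I checked the key steps: the expansion $[\mathcal W_0,[\mathcal W_0,[\mathcal W_0,\mathcal W_1]_q]_{q^{-1}}]=\mathcal W_0^2C+C\mathcal W_0^2-(q^2+q^{-2})\mathcal W_0C\mathcal W_0$ with $C=[\mathcal W_0,\mathcal W_1]$ is right; the observation that this symmetric expression equals both $[\mathcal W_0,[\mathcal W_0,C]_q]_{q^{-1}}$ and $[[C,\mathcal W_0]_q,\mathcal W_0]_{q^{-1}}$ is exactly what is needed to reconcile the two orientations in \eqref{eq:3p2}; the identity $[V,\mathcal W_0]_{q^{-1}}-[\mathcal W_0,V]_{q^{-1}}=(q+q^{-1})[V,\mathcal W_0]$ holds; and $[\mathcal W_{-1},\mathcal W_0]=0$ is indeed the $k=1$, $\ell=0$ case of \eqref{eq:3p31}, leaving $\rho[\mathcal W_0,\mathcal W_1]$ as required. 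The mirror argument for the second relation via \eqref{eq:3p3} and \eqref{eq:3p4} goes through the same way. The only stylistic remark: you could shorten the symmetry argument for the second relation by invoking the automorphism $\sigma$ of $\mathcal O_q$, which swaps $\mathcal W_0\leftrightarrow\mathcal W_1$ (and $\mathcal G_{k+1}\leftrightarrow\tilde{\mathcal G}_{k+1}$, $\mathcal W_{-k}\leftrightarrow\mathcal W_{k+1}$), rather than redoing the bookkeeping; but your explicit route is equally valid.
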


We have defined an automorphism and an antiautomorphism of $O_q$. We define analogous maps for $\mathcal O_q$.
\begin{lemma}{\rm (See \cite[Lemma 3.1]{T3}.)}
There exists an automorphism $\sigma$ of $\mathcal O_q$ that sends
$$ \mathcal W_{-k} \mapsto \mathcal W_{k+1} , \qquad \mathcal W_{k+1} \mapsto \mathcal W_{-k}, \qquad \mathcal G_{k+1} \mapsto \tilde{\mathcal G}_{k+1}, \qquad \tilde{\mathcal G}_{k+1} \mapsto \mathcal G_{k+1}$$
for all $k \in \mathbb{N}$.
\end{lemma}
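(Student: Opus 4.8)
The plan is to use the universal property of an algebra presented by generators and relations. Since $\mathcal{O}_q$ is the quotient of the free unital $K$-algebra on the generators \eqref{e} by the two-sided ideal generated by the relations \eqref{eq:3p1}--\eqref{eq:3p11}, to produce an algebra homomorphism $\sigma:\mathcal{O}_q \to \mathcal{O}_q$ with the prescribed action on generators it suffices to check that the proposed images of the generators again satisfy all of \eqref{eq:3p1}--\eqref{eq:3p11}. Concretely, I would apply $\sigma$ formally to each defining relation and verify that the resulting identity is a consequence of the defining relations. In fact, in every case the image will turn out to be one of the relations themselves, possibly after negating both sides or swapping the two summands, so that $\sigma$ simply permutes the relations among themselves.

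The verification proceeds relation by relation, and the key bookkeeping device is that $\sigma$ implements the index involution $-k \leftrightarrow k+1$ together with the swap $\mathcal{G}_{k+1} \leftrightarrow \tilde{\mathcal{G}}_{k+1}$. In particular $\sigma(\mathcal{W}_0)=\mathcal{W}_1$, $\sigma(\mathcal{W}_1)=\mathcal{W}_0$, and $\sigma(\mathcal{W}_{-k-1})=\mathcal{W}_{k+2}$. Tracking these, one finds that $\sigma$ interchanges \eqref{eq:3p2} and \eqref{eq:3p3}, interchanges \eqref{eq:3p4} and \eqref{eq:3p31}, interchanges \eqref{eq:3p6} and \eqref{eq:3p9}, interchanges \eqref{eq:3p7} and \eqref{eq:3p8}, and interchanges \eqref{eq:3p32} and \eqref{eq:3p10}; the remaining relations \eqref{eq:3p1}, \eqref{eq:3p5}, and \eqref{eq:3p11} are each carried to themselves, up to an overall sign or a reordering of the two terms. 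Since $\sigma$ is to be an algebra homomorphism, it commutes with the $q$-commutator in the sense that $\sigma([u,v]_q)=[\sigma(u),\sigma(v)]_q$, so no reversal of the $q$-commutator occurs; the only signs to watch are those arising from ordinary commutators on the right-hand sides, such as the passage from $\tilde{\mathcal{G}}_{k+1}-\mathcal{G}_{k+1}$ to $\mathcal{G}_{k+1}-\tilde{\mathcal{G}}_{k+1}$ in \eqref{eq:3p1}. This establishes the existence of the homomorphism $\sigma$.

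To upgrade $\sigma$ from a homomorphism to an automorphism, I would observe that $\sigma$ is an involution. Applying the prescribed action twice returns each generator to itself: $\mathcal{W}_{-k}\mapsto\mathcal{W}_{k+1}\mapsto\mathcal{W}_{-k}$, $\mathcal{G}_{k+1}\mapsto\tilde{\mathcal{G}}_{k+1}\mapsto\mathcal{G}_{k+1}$, and similarly for the other two families. Hence $\sigma\circ\sigma$ agrees with the identity on a generating set, so $\sigma\circ\sigma=\mathrm{id}$, and therefore $\sigma$ is a bijection with $\sigma^{-1}=\sigma$, i.e.\ an automorphism of $\mathcal{O}_q$.

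I expect the main obstacle to be purely organizational rather than conceptual. The relations \eqref{eq:3p1}--\eqref{eq:3p11} are numerous, and the substitution must be carried out carefully so that each index shift $-k\mapsto k+1$, and especially the shift $-k-1\mapsto k+2$ appearing in \eqref{eq:3p2}, together with each sign coming from an ordinary commutator, is tracked correctly. Once one notices that $\sigma$ permutes the relations among themselves in the pairing described above, there is no real difficulty; the only risk is a slip in the case analysis.
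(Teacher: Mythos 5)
Your proof is correct: the relation-by-relation bookkeeping (including the pairings \eqref{eq:3p2}$\leftrightarrow$\eqref{eq:3p3}, \eqref{eq:3p31}$\leftrightarrow$\eqref{eq:3p4}, \eqref{eq:3p6}$\leftrightarrow$\eqref{eq:3p9}, \eqref{eq:3p7}$\leftrightarrow$\eqref{eq:3p8}, \eqref{eq:3p32}$\leftrightarrow$\eqref{eq:3p10} and the shift $\mathcal W_{-k-1}\mapsto\mathcal W_{k+2}$) checks out, and the involution argument gives bijectivity. The paper itself offers no proof, simply citing \cite[Lemma 3.1]{T3}, and your argument is exactly the standard one used there, so the approaches coincide.
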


\begin{lemma}
 {\rm(See \cite[Lemma 3.2]{T3}.)}
There exists an antiautomorphism $\dagger$ of $\mathcal O_q$ that sends
$$ \mathcal W_{-k} \mapsto \mathcal W_{-k} , \qquad \mathcal W_{k+1} \mapsto \mathcal W_{k+1} ,\qquad  \mathcal G_{k+1} \mapsto \tilde{ \mathcal G}_{k+1} , \qquad \tilde{\mathcal G}_{k+1} \mapsto \mathcal G_{k+1}$$
for all $k \in \mathbb{N}$.
\end{lemma}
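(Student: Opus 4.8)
The plan is to construct $\dagger$ directly from the presentation of $\mathcal{O}_q$ given in Definition \ref{note}. Since $\mathcal{O}_q$ is the quotient of the free algebra $F$ on the alternating generators \eqref{e} by the two-sided ideal $I$ generated by the relations \eqref{eq:3p1}--\eqref{eq:3p11}, I would first send each generator to its prescribed image and extend this assignment to the unique \emph{antihomomorphism} $\dagger\colon F \to F$ determined by it; such an extension exists and is unique because $F^{op}$ is itself free on the same generating set. To see that $\dagger$ descends to $\mathcal{O}_q$, it then suffices to check that $\dagger$ carries each defining relation into $I$. To see that the resulting map is a bijection, I would verify that $\dagger \circ \dagger$ fixes every generator, hence equals the identity, so that $\dagger$ is its own inverse.

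The computational engine is the interaction of an antihomomorphism with the two bracket operations from Section 2. A direct expansion gives, for all $u,v$ and all nonzero $r \in K$,
$$\dagger([u,v]) = [\dagger v, \dagger u], \qquad \dagger([u,v]_r) = [\dagger v, \dagger u]_r.$$
That is, $\dagger$ reverses the order of the two entries of each bracket while leaving the deformation parameter untouched. Equipped with these identities, each relation \eqref{eq:3p1}--\eqref{eq:3p11} can be transported through $\dagger$ by a mechanical substitution, using that $\dagger$ fixes every $\mathcal{W}_{-k}$ and $\mathcal{W}_{k+1}$ and exchanges $\mathcal{G}_{k+1} \leftrightarrow \tilde{\mathcal{G}}_{k+1}$.

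Carrying this out relation by relation, one finds the list \eqref{eq:3p1}--\eqref{eq:3p11} is stable under $\dagger$. Concretely: relation \eqref{eq:3p1} returns to itself up to an overall sign; in \eqref{eq:3p2} and \eqref{eq:3p3} the two left-hand bracket expressions are interchanged while the right-hand side is fixed; the pure $\mathcal{W}$-relations \eqref{eq:3p31}, \eqref{eq:3p4}, \eqref{eq:3p5} and the mixed relation \eqref{eq:3p11} each return to themselves after the index swap $k \leftrightarrow \ell$ (and possibly a sign); and the remaining relations are exchanged in pairs, \eqref{eq:3p6} $\leftrightarrow$ \eqref{eq:3p7}, \eqref{eq:3p8} $\leftrightarrow$ \eqref{eq:3p9}, and \eqref{eq:3p32} $\leftrightarrow$ \eqref{eq:3p10}, the exchange being precisely the swap $\mathcal{G} \leftrightarrow \tilde{\mathcal{G}}$ induced by $\dagger$. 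In every case the image lies in $I$, so $\dagger$ descends to an antihomomorphism of $\mathcal{O}_q$.

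I expect the only genuine work, and the only place to slip, to be the bookkeeping in this last step: tracking simultaneously the order reversal from $\dagger([u,v]) = [\dagger v, \dagger u]$, the sign it introduces, the index swap $k \leftrightarrow \ell$, and the generator swap $\mathcal{G} \leftrightarrow \tilde{\mathcal{G}}$, and then confirming that each resulting expression is \emph{genuinely} one of the listed relations rather than merely resembling one. Once stability of the relation list is established, bijectivity follows at once from $\dagger^2 = \mathrm{id}$ on generators, which completes the proof.
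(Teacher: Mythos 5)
Your argument is correct, but note that the paper itself does not prove this lemma at all: it simply cites \cite[Lemma 3.2]{T3}, so what you have written is a self-contained replacement for a proof the paper delegates to the literature. Your verification is the standard one and it goes through: the identities $\dagger([u,v])=[\dagger v,\dagger u]$ and $\dagger([u,v]_r)=[\dagger v,\dagger u]_r$ are correct for an antihomomorphism, and checking the relations one finds exactly the pattern you describe --- \eqref{eq:3p1} and \eqref{eq:3p5} pick up an overall sign, the two left-hand $q$-commutators in \eqref{eq:3p2} and in \eqref{eq:3p3} are swapped while the right-hand sides are fixed (which generates the same ideal), \eqref{eq:3p31}, \eqref{eq:3p4}, \eqref{eq:3p11} return to themselves after $k\leftrightarrow\ell$, and \eqref{eq:3p6}$\leftrightarrow$\eqref{eq:3p7}, \eqref{eq:3p8}$\leftrightarrow$\eqref{eq:3p9}, \eqref{eq:3p32}$\leftrightarrow$\eqref{eq:3p10} are exchanged in pairs. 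Two small points worth making explicit if you write this up: first, the reason $\dagger$ descends is that an antihomomorphism of the free algebra sends the two-sided ideal generated by the relation elements into the two-sided ideal generated by their images, so it suffices (as you say) that each image lies in $I$; second, $\dagger\circ\dagger$ is an algebra homomorphism (the composite of two antihomomorphisms), so fixing the generators does force it to be the identity, giving bijectivity. The constants $\mathcal G_0$, $\tilde{\mathcal G}_0$ introduced after Definition \ref{note} play no role, since the defining relations involve only $\mathcal G_{k+1}$, $\tilde{\mathcal G}_{k+1}$ with $k\in\mathbb N$.
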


\begin{lemma} \label{westernmassachusetts}
This diagram commutes:
\begin{center}
\begin{tikzcd}
\mathcal O_q \arrow[r,"\dagger"] \arrow[d,"\sigma"'] & \mathcal O_q \arrow[d,"\sigma"] \\
\mathcal O_q \arrow[r,"\dagger"'] & \mathcal O_q 
\end{tikzcd}
\end{center}
\end{lemma}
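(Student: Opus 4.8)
The plan is to imitate the proof of the corresponding statement for $O_q$ and simply chase the alternating generators around the diagram, but with one structural observation that makes this legitimate. Since $\sigma$ is an algebra automorphism (order preserving) and $\dagger$ is an antiautomorphism (order reversing), both composites $\sigma \circ \dagger$ and $\dagger \circ \sigma$ are antiautomorphisms of $\mathcal O_q$: for instance $(\sigma\circ\dagger)(ab) = \sigma(\dagger(b)\dagger(a)) = (\sigma\circ\dagger)(b)(\sigma\circ\dagger)(a)$, and likewise for $\dagger\circ\sigma$. The key point is that an antiautomorphism is determined by its action on a generating set: on a word $a_1 a_2 \cdots a_n$ in the alternating generators it takes the value (image of $a_n$) $\cdots$ (image of $a_1$), so two antiautomorphisms that agree on the generators agree on every word, hence everywhere by linearity. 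Thus it suffices to check that $\sigma\circ\dagger$ and $\dagger\circ\sigma$ agree on each element listed in \eqref{e}.

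The verification is then a short case check over the four families. For the $\mathcal W$'s the map $\dagger$ acts as the identity, so the two composites differ only by whether $\sigma$ is applied first or last, and in either order one lands in the same place: $(\sigma\circ\dagger)(\mathcal W_{-k}) = \sigma(\mathcal W_{-k}) = \mathcal W_{k+1}$, while $(\dagger\circ\sigma)(\mathcal W_{-k}) = \dagger(\mathcal W_{k+1}) = \mathcal W_{k+1}$, and symmetrically for $\mathcal W_{k+1}$. For the $\mathcal G$'s and $\tilde{\mathcal G}$'s, both $\sigma$ and $\dagger$ perform the same swap $\mathcal G_{k+1} \leftrightarrow \tilde{\mathcal G}_{k+1}$, so each composite applies this swap twice and returns the generator to itself: $(\sigma\circ\dagger)(\mathcal G_{k+1}) = \sigma(\tilde{\mathcal G}_{k+1}) = \mathcal G_{k+1} = \dagger(\tilde{\mathcal G}_{k+1}) = (\dagger\circ\sigma)(\mathcal G_{k+1})$, and identically for $\tilde{\mathcal G}_{k+1}$. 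Since the two antiautomorphisms agree on all generators in \eqref{e}, they coincide, which is exactly the asserted commutativity of the square.

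There is no serious obstacle here; the content is entirely bookkeeping. The one place that requires a moment of care is the preliminary observation that both composites are antiautomorphisms and that such maps are pinned down by their values on generators, since this is what converts a four-line generator check into a global identity. The well-definedness of $\sigma$ and $\dagger$ themselves is not at issue, as it is supplied by the two preceding lemmas on $\mathcal O_q$; we use only their stated actions on the alternating generators.
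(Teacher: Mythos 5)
Your proof is correct and is essentially the paper's own argument: the paper's proof is precisely ``chase the alternating generators around the diagram,'' and your generator-by-generator check matches it. Your added observation that both composites are antiautomorphisms and hence determined by their values on the generators is a worthwhile explicit justification of why the generator check suffices, but it does not change the route.
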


\begin{proof}
Chase the alternating generators of $\mathcal O_q$ around the diagram.
\end{proof}


We bring in some notation. Let $\{z_i\}_{i = 1}^{\infty}$ be mutually commuting indeterminates. Let $K[z_1,z_2, \ldots ]$ denote the algebra of polynomials in $\{z_i\}_{i=1}^{\infty}$ that have coefficients in $K$. 

\begin{proposition} \label{ruthenium}
There exists an algebra isomorphism
$$\phi: O_q \otimes K[z_1,z_2, \ldots ] \mapsto \mathcal{O}_q$$ such that $$\phi(W_0 \otimes 1) = \mathcal{W}_0, \qquad \qquad \phi(W_1 \otimes 1) = \mathcal W_1,$$ and these diagrams commute:

\begin{center}
\begin{tikzcd}
O_q \otimes K[z_1,z_2,\ldots] \arrow[r,"\phi"] \arrow[d,"\sigma \otimes id"'] & \mathcal O_q \arrow[d,"\sigma"] \\
O_q \otimes K[z_1,z_2,\ldots] \arrow[r,"\phi"'] & \mathcal O_q
\end{tikzcd}
\qquad
\begin{tikzcd}
O_q \otimes K[z_1,z_2,\ldots] \arrow[r,"\phi"] \arrow[d,"\dagger \otimes id"'] & \mathcal O_q \arrow[d,"\dagger"] \\
O_q \otimes K[z_1,z_2,\ldots] \arrow[r,"\phi"'] & \mathcal O_q.
\end{tikzcd}
\end{center}
\end{proposition}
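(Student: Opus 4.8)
The plan is to build $\phi$ directly from the isomorphism $\varphi$ of \cite{T3} and then reduce each diagram to a check on algebra generators. First I would set $\phi = \varphi^{-1}$, so that $\phi$ is an algebra isomorphism by construction. The values $\phi(W_0 \otimes 1) = \mathcal{W}_0$ and $\phi(W_1 \otimes 1) = \mathcal{W}_1$ then hold because $\varphi$ sends $\mathcal{W}_0 \mapsto W_0 \otimes 1$ and $\mathcal{W}_1 \mapsto W_1 \otimes 1$; moreover $\phi(1 \otimes z_i) = \mathcal{Z}_i$ for every $i \ge 1$, since $\mathcal{Z}_i$ was defined as the $\varphi$-preimage of $1 \otimes z_i$. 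This disposes of the first assertion and records how $\phi$ acts on a generating set of $O_q \otimes K[z_1,z_2,\ldots]$, namely on $W_0 \otimes 1$, $W_1 \otimes 1$, and the $1 \otimes z_i$.

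For the left-hand square, both composites $\phi \circ (\sigma \otimes \mathrm{id})$ and $\sigma \circ \phi$ are algebra homomorphisms from $O_q \otimes K[z_1,z_2,\ldots]$ to $\mathcal{O}_q$, so it suffices to verify that they agree on the above generators. On $W_0 \otimes 1$ the left path gives $\phi(W_1 \otimes 1) = \mathcal{W}_1$ while the right path gives $\sigma(\mathcal{W}_0) = \mathcal{W}_1$, and on $W_1 \otimes 1$ both paths yield $\mathcal{W}_0$ in the same way. For the right-hand square the two composites $\phi \circ (\dagger \otimes \mathrm{id})$ and $\dagger \circ \phi$ are antihomomorphisms (note $\dagger \otimes \mathrm{id}$ reverses multiplication because $K[z_1,z_2,\ldots]$ is commutative), but an antihomomorphism out of $O_q \otimes K[z_1,z_2,\ldots]$ is likewise determined by its values on generators; on $W_0 \otimes 1$ and $W_1 \otimes 1$ both paths fix the index and return $\mathcal{W}_0$ and $\mathcal{W}_1$ respectively. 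Thus in each diagram the only generators left to check are the central ones $1 \otimes z_i$.

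On these generators the two squares reduce, respectively, to the identities $\sigma(\mathcal{Z}_i) = \mathcal{Z}_i$ and $\dagger(\mathcal{Z}_i) = \mathcal{Z}_i$ for all $i \ge 1$: the left path sends $1 \otimes z_i$ to $\mathcal{Z}_i$ in both diagrams, whereas the right path sends it to $\sigma(\mathcal{Z}_i)$ and to $\dagger(\mathcal{Z}_i)$. I expect this invariance of the central elements to be the main obstacle, as it is the one step that is not formal in $W_0$ and $W_1$. To establish it I would appeal to the explicit construction of the $\mathcal{Z}_i$ in \cite{T2}, where they are extracted from a generating function in the alternating generators that is symmetric under the interchange realized by $\sigma$ and stable under $\dagger$; combined with Lemma \ref{westernmassachusetts} and the defining actions of $\sigma$ and $\dagger$ on $\mathcal{O}_q$, this forces each $\mathcal{Z}_i$ to be fixed. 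Granting $\sigma(\mathcal{Z}_i) = \mathcal{Z}_i$ and $\dagger(\mathcal{Z}_i) = \mathcal{Z}_i$, both composites in each diagram agree on all generators and hence the diagrams commute, completing the argument.
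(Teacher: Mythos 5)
Your proposal is correct and follows essentially the same route as the paper: the paper's proof simply cites \cite{T2} (Propositions 8.8 and 8.9) for the existence of $\phi$ and the commutativity of the diagrams, and your argument just unfolds that citation into a generator chase whose only nontrivial input --- that $\sigma$ and $\dagger$ fix each $\mathcal Z_i$ --- you likewise defer to the explicit construction in \cite{T2}, \cite{T3}. The one caveat is that the generating function for the $\mathcal Z_i$ (cf.\ Lemma \ref{lazor}) is not term-by-term symmetric under the interchange $\mathcal W^\pm \leftrightarrow \mathcal W^\mp$, $\mathcal G \leftrightarrow \tilde{\mathcal G}$, so the invariance genuinely requires the relations of $\mathcal O_q$ as established in the cited source rather than being visible from the formula alone.
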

\begin{proof}
The map $\phi$ is the isomorphism defined in \cite[Proposition 8.8]{T2}. The diagrams commute by \cite[Proposition 8.9]{T2}.
\end{proof}

\begin{definition} \label{pomegranate}
\rm
For $i\geq 1$, define $\mathcal Z_i = \phi(1 \otimes z_i)$. For notational convenience, define $\mathcal Z_0 = (q+q^{-1})^2$. \end{definition}

\begin{lemma}
The elements $$\mathcal W_0, \qquad \mathcal W_1, \qquad \{\mathcal Z_i\}_{i=1}^{\infty}$$ generate $\mathcal O_q$. Moreover, the elements $\{\mathcal Z_i\}_{i=1}^{\infty}$ are central in $\mathcal O_q$.
\end{lemma}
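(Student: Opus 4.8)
The plan is to transport both assertions through the isomorphism $\phi$ of Proposition \ref{ruthenium}. The key observation is that $\phi$ identifies $\mathcal O_q$ with the tensor product $O_q \otimes K[z_1,z_2,\ldots]$, and that under this identification the elements $\mathcal W_0$, $\mathcal W_1$, and $\mathcal Z_i$ correspond respectively to $W_0 \otimes 1$, $W_1 \otimes 1$, and $1 \otimes z_i$ (the first two by the defining property of $\phi$, the last by Definition \ref{pomegranate}). Thus both claims reduce to elementary statements about the tensor product, after which one applies $\phi$.

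For the generation claim, I would first recall the general fact that if an algebra $A$ is generated by a set $S$ and an algebra $B$ is generated by a set $T$, then $A \otimes B$ is generated by $\{s \otimes 1 : s \in S\} \cup \{1 \otimes t : t \in T\}$. This follows from the identity $a \otimes b = (a \otimes 1)(1 \otimes b)$ together with the fact that $a \mapsto a \otimes 1$ and $b \mapsto 1 \otimes b$ are algebra homomorphisms, so that every simple tensor, and hence every element, lies in the subalgebra generated by these elements. Applying this with $A = O_q$ generated by $\{W_0, W_1\}$ and $B = K[z_1,z_2,\ldots]$ generated by $\{z_i\}_{i=1}^{\infty}$, the tensor product is generated by $W_0 \otimes 1$, $W_1 \otimes 1$, and $\{1 \otimes z_i\}_{i=1}^{\infty}$. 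Since $\phi$ is an algebra isomorphism, it carries a generating set to a generating set, so the images $\mathcal W_0, \mathcal W_1, \{\mathcal Z_i\}_{i=1}^{\infty}$ generate $\mathcal O_q$.

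For the centrality claim, I would observe that $1 \otimes z_i$ is central in $O_q \otimes K[z_1,z_2,\ldots]$. It suffices to check this on simple tensors and invoke bilinearity: for any $a \in O_q$ and any polynomial $f$, we have $(1 \otimes z_i)(a \otimes f) = a \otimes z_i f = a \otimes f z_i = (a \otimes f)(1 \otimes z_i)$, where the middle equality uses that the polynomial algebra $K[z_1,z_2,\ldots]$ is commutative. An algebra isomorphism preserves the center, so $\mathcal Z_i = \phi(1 \otimes z_i)$ is central in $\mathcal O_q$ for each $i \ge 1$.

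There is no serious obstacle here; essentially all of the content is packaged into Proposition \ref{ruthenium}, which is cited from \cite{T2}. The only points requiring care are to state the tensor-product generation fact correctly (checking it on simple tensors and extending by bilinearity) and to note explicitly that an algebra isomorphism preserves both the property of being a generating set and the property of being central.
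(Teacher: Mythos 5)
Your proof is correct and follows exactly the same route as the paper, which simply cites Proposition \ref{ruthenium} together with the centrality of $1 \otimes z_i$ in $O_q \otimes K[z_1,z_2,\ldots]$; you have merely spelled out the routine details (generation and centrality are preserved by an algebra isomorphism) that the paper leaves implicit.
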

\begin{proof}
By Proposition \ref{ruthenium} and since $1 \otimes z_i$ is central in $O_q \otimes K[z_1,z_2,\ldots]$ for $i \ge 1.$
\end{proof}

\begin{definition} \label{newlyadded} \rm
Define an algebra homomorphism $\epsilon : K[z_1,z_2,\ldots] \mapsto K$ that sends $z_i \mapsto 0$ for $i \ge 1.$
\end{definition}

\begin{definition} \rm{(See \cite[Definition 11.2]{T2}.)} \label{indium}
\rm Let $\gamma: \mathcal O_q \mapsto O_q$ denote the composition
$$\gamma: \quad \mathcal{O}_q \xrightarrow{\phi^{-1}} O_q \otimes K[z_1,z_2,\ldots] \xrightarrow{id \otimes \epsilon} O_q \otimes K \xrightarrow{x \otimes 1 \mapsto x} O_q,$$
where $\phi$ is from Proposition \ref{ruthenium}.
\end{definition}
\medskip
Note that $\gamma$ is an algebra homomorphism.

\begin{lemma} \label{pittsburgh}
The map $\gamma$ sends
\begin{align*}
\mathcal W_0 \mapsto W_0, \qquad \mathcal W_1 \mapsto W_1, \qquad
\mathcal Z_i \mapsto 0, \quad i\geq 1.
\end{align*}
\end{lemma}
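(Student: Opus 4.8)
The plan is to evaluate $\gamma$ on each of the three types of elements by tracing it through the three-step composition that defines it in Definition \ref{indium}, namely $\phi^{-1}$, followed by $id \otimes \epsilon$, followed by the canonical identification $O_q \otimes K \cong O_q$ sending $x \otimes 1 \mapsto x$. Since each of these three maps is an algebra homomorphism, their composite $\gamma$ is too, so it suffices to compute the images of $\mathcal W_0$, $\mathcal W_1$, and the $\mathcal Z_i$.

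First I would handle $\mathcal W_0$ and $\mathcal W_1$. By Proposition \ref{ruthenium} we have $\phi(W_0 \otimes 1) = \mathcal W_0$ and $\phi(W_1 \otimes 1) = \mathcal W_1$, so applying $\phi^{-1}$ sends $\mathcal W_0 \mapsto W_0 \otimes 1$ and $\mathcal W_1 \mapsto W_1 \otimes 1$. The map $id \otimes \epsilon$ then fixes these, since $\epsilon(1) = 1$ as $\epsilon$ is an algebra homomorphism and hence unital; finally the identification $x \otimes 1 \mapsto x$ returns $W_0$ and $W_1$ respectively. This gives $\gamma(\mathcal W_0) = W_0$ and $\gamma(\mathcal W_1) = W_1$.

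Next I would handle $\mathcal Z_i$ for $i \ge 1$. By Definition \ref{pomegranate} we have $\mathcal Z_i = \phi(1 \otimes z_i)$, so $\phi^{-1}(\mathcal Z_i) = 1 \otimes z_i$. Applying $id \otimes \epsilon$ and using $\epsilon(z_i) = 0$ from Definition \ref{newlyadded}, we obtain $1 \otimes 0 = 0$, which the final identification sends to $0$. Hence $\gamma(\mathcal Z_i) = 0$ for all $i \ge 1$, completing the verification.

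There is no genuine obstacle here: the statement is a direct diagram chase through the definition of $\gamma$, and the only points requiring (minor) attention are invoking the correct defining relation for $\phi$ from Proposition \ref{ruthenium}, the definition of $\mathcal Z_i$ from Definition \ref{pomegranate}, and the behavior of $\epsilon$ from Definition \ref{newlyadded}.
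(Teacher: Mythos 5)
Your proposal is correct and is exactly the diagram chase that the paper summarizes as a ``routine consequence of Definition \ref{indium}''; you have simply written out the three steps of the composition explicitly, using Proposition \ref{ruthenium}, Definition \ref{pomegranate}, and Definition \ref{newlyadded} just as intended. No differences in approach, only in level of detail.
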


\begin{proof}
Routine consequence of Definition \ref{indium}.
\end{proof}

\begin{definition} \label{worc}
\rm (See \cite[Definition 11.5]{T2}.)
For $k \in \mathbb{N}$, define
$$W_{-k} = \gamma(\mathcal W_{-k}), \qquad W_{k+1} = \gamma(\mathcal W_{k+1}) , \qquad G_{k+1} = \gamma(\mathcal G_{k+1}), \qquad \tilde G_{k+1} = \gamma(\mathcal{\tilde{G}}_{k+1}).$$
We call these elements the \textit{alternating elements} of $O_q$.
For notational convenience, we define
\begin{align}
     G_0 = -\left(q-q^{-1}\right)\left(q+q^{-1}\right)^2, \qquad \qquad
    \tilde{G}_0  =  -\left(q-q^{-1}\right)\left(q+q^{-1}\right)^2.  \nonumber
\end{align}
\end{definition}

\begin{lemma} \label{imani}
These diagrams commute:
\begin{center}
\begin{tikzcd}
\mathcal O_q \arrow[r,"\sigma"] \arrow[d,"\gamma"'] & \mathcal O_q \arrow[d,"\gamma"] \\
O_q \arrow[r,"\sigma"'] & O_q 
\end{tikzcd}
\qquad \qquad
\begin{tikzcd}
\mathcal O_q \arrow[r,"\dagger"] \arrow[d,"\gamma"'] & \mathcal O_q \arrow[d,"\gamma"] \\
O_q \arrow[r,"\dagger"'] & O_q 
\end{tikzcd}
\end{center}
\end{lemma}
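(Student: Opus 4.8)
The plan is to verify commutativity of each square by diagram-chasing, reducing everything to the already-established commuting squares for the maps on $\mathcal O_q$ and the tensor-product description of $\gamma$ from Definition \ref{indium}. Recall that $\gamma$ factors as $\mathcal O_q \xrightarrow{\phi^{-1}} O_q \otimes K[z_1,z_2,\ldots] \xrightarrow{\mathrm{id}\otimes\epsilon} O_q$. Since $\sigma$ and $\dagger$ on $\mathcal O_q$ are each compatible with $\phi$ (Proposition \ref{ruthenium}) and with $\sigma\otimes\mathrm{id}$, $\dagger\otimes\mathrm{id}$ on $O_q\otimes K[z_1,z_2,\ldots]$, the work is to glue these compatibilities to the evaluation map $\mathrm{id}\otimes\epsilon$.

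First I would treat the $\sigma$-square. Using $\gamma = (\text{project})\circ(\mathrm{id}\otimes\epsilon)\circ\phi^{-1}$, I would rewrite $\sigma\circ\gamma$ and $\gamma\circ\sigma$ by inserting $\phi^{-1}$. The left square of Proposition \ref{ruthenium} gives $\phi\circ(\sigma\otimes\mathrm{id}) = \sigma\circ\phi$, equivalently $\phi^{-1}\circ\sigma = (\sigma\otimes\mathrm{id})\circ\phi^{-1}$. So $\gamma\circ\sigma$ on $\mathcal O_q$ becomes, after pushing through $\phi^{-1}$, the composite $(\mathrm{id}\otimes\epsilon)\circ(\sigma\otimes\mathrm{id})\circ\phi^{-1}$ (up to the final projection). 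The remaining point is that $\sigma$ on $O_q$ commutes with the evaluation: concretely $(\mathrm{id}\otimes\epsilon)\circ(\sigma\otimes\mathrm{id}) = \sigma\circ(\mathrm{id}\otimes\epsilon)$, which holds because $\sigma$ acts only on the $O_q$-factor while $\epsilon$ acts only on the polynomial factor, so the two operations are applied to independent tensor legs and commute. This yields $\gamma\circ\sigma = \sigma\circ\gamma$. The $\dagger$-square is entirely parallel, using the right square of Proposition \ref{ruthenium} in place of the left; the one nuance is that $\dagger$ is an antiautomorphism, but since $\epsilon$ and the projection are genuine homomorphisms and $\dagger\otimes\mathrm{id}$ again touches only the $O_q$-leg, the same leg-independence argument applies without change.

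Alternatively, and perhaps more cleanly for the write-up, I would note that both squares can be checked on algebra generators. By the earlier lemma, $\mathcal W_0,\mathcal W_1,\{\mathcal Z_i\}_{i\ge1}$ generate $\mathcal O_q$, and Lemma \ref{pittsburgh} records the $\gamma$-images. On $\mathcal W_0,\mathcal W_1$ both $\sigma$ and $\dagger$ permute/fix these generators in a way that matches their action on $W_0,W_1$ in $O_q$ under $\gamma$, and on each $\mathcal Z_i$ one checks that $\sigma$ and $\dagger$ send $\mathcal Z_i$ to something $\gamma$ again kills (the central $\mathcal Z_i$ map to $0$ under $\gamma$, and $\sigma,\dagger$ preserve the central subalgebra generated by the $\mathcal Z_i$). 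This reduces each square to a finite check on generators.

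The main obstacle I anticipate is the behavior of $\sigma$ and $\dagger$ on the central elements $\mathcal Z_i$: I must confirm that these maps carry the subalgebra $\phi(1\otimes K[z_1,z_2,\ldots])$ into itself, so that $\gamma\circ\sigma$ and $\gamma\circ\dagger$ annihilate each $\mathcal Z_i$ just as $\sigma\circ\gamma$ and $\dagger\circ\gamma$ do. This is exactly what the commuting squares of Proposition \ref{ruthenium} encode—they guarantee $\sigma$ and $\dagger$ respect the tensor decomposition and hence stabilize the polynomial factor—so once that proposition is invoked the obstacle dissolves and the remaining verification is routine.
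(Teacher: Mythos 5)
Your proposal is correct, and your ``alternative'' argument---checking both squares on the generators $\mathcal W_0$, $\mathcal W_1$, $\{\mathcal Z_i\}_{i\ge 1}$ using Lemma \ref{pittsburgh} and the fact that $\sigma$ and $\dagger$ fix each $\mathcal Z_i$ (via Proposition \ref{ruthenium})---is exactly the paper's proof, which just says to chase those generators around each diagram. Your first, factorization-based argument is a more explicit unpacking of why that chase suffices, but it is not a genuinely different route.
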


\begin{proof}
Chase $\mathcal{W}_0$, $\mathcal{W}_1$, and $\mathcal Z_i$ around each diagram.
\end{proof}

\begin{corollary}
Pick $k \in \mathbb{N}.$
The automorphism $\sigma$ of $O_q$ sends
$$W_{-k} \mapsto  W_{k+1} , \qquad  W_{k+1} \mapsto  W_{-k}, \qquad G_{k+1} \mapsto \tilde{ G}_{k+1}, \qquad \tilde{ G}_{k+1} \mapsto  G_{k+1}.$$
The antiautomorphism $\dagger$ of $O_q$ sends
$$W_{-k} \mapsto W_{-k} , \qquad   W_{k+1} \mapsto W_{k+1} ,\qquad  G_{k+1} \mapsto \tilde{ G}_{k+1} , \qquad \tilde{G}_{k+1} \mapsto  G_{k+1}.$$

\end{corollary}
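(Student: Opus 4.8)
The plan is to reduce the statement to a diagram chase, using the fact that by Definition \ref{worc} every alternating element of $O_q$ is the $\gamma$-image of the corresponding alternating generator of $\mathcal{O}_q$, together with the two commuting squares of Lemma \ref{imani}. The strategy is to push $\sigma$ (resp.\ $\dagger$) through $\gamma$, apply the already-known action of $\sigma$ (resp.\ $\dagger$) on the alternating generators of $\mathcal{O}_q$, and then translate back to $O_q$ via Definition \ref{worc}.

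First I would treat $\sigma$. For each $k \in \mathbb{N}$, write $W_{-k} = \gamma(\mathcal{W}_{-k})$ by Definition \ref{worc}, then use the left square of Lemma \ref{imani}, which asserts $\sigma \circ \gamma = \gamma \circ \sigma$, to obtain $\sigma(W_{-k}) = \sigma(\gamma(\mathcal{W}_{-k})) = \gamma(\sigma(\mathcal{W}_{-k}))$. Now apply the lemma recording the action of $\sigma$ on the alternating generators of $\mathcal{O}_q$, namely $\sigma(\mathcal{W}_{-k}) = \mathcal{W}_{k+1}$, and finish with Definition \ref{worc} again, giving $\gamma(\mathcal{W}_{k+1}) = W_{k+1}$. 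Hence $\sigma(W_{-k}) = W_{k+1}$. The remaining three assignments for $\sigma$ (on $W_{k+1}$, $G_{k+1}$, and $\tilde{G}_{k+1}$) follow by the identical three-step substitution, using $\sigma(\mathcal{W}_{k+1}) = \mathcal{W}_{-k}$, $\sigma(\mathcal{G}_{k+1}) = \tilde{\mathcal{G}}_{k+1}$, and $\sigma(\tilde{\mathcal{G}}_{k+1}) = \mathcal{G}_{k+1}$ respectively.

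Next I would repeat the argument for $\dagger$, now invoking the right square of Lemma \ref{imani}, which gives $\dagger \circ \gamma = \gamma \circ \dagger$. For instance $\dagger(W_{-k}) = \gamma(\dagger(\mathcal{W}_{-k})) = \gamma(\mathcal{W}_{-k}) = W_{-k}$, and likewise $\dagger(G_{k+1}) = \gamma(\dagger(\mathcal{G}_{k+1})) = \gamma(\tilde{\mathcal{G}}_{k+1}) = \tilde{G}_{k+1}$, using the recorded action of $\dagger$ on the alternating generators of $\mathcal{O}_q$; the assignments on $W_{k+1}$ and $\tilde{G}_{k+1}$ are handled the same way.

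I do not anticipate a genuine obstacle: the content is entirely bookkeeping, and all the substantive work has already been carried out in establishing the commuting squares of Lemma \ref{imani} together with the actions of $\sigma$ and $\dagger$ on the generators of $\mathcal{O}_q$. The only point that warrants a moment's care is to confirm that each chain of equalities applies uniformly to all four families of generators and for every $k \in \mathbb{N}$, so that the stated assignments hold in full generality rather than merely in low-index cases.
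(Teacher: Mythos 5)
Your argument is exactly the paper's proof, merely written out in full: the paper's one-line proof says to chase $\mathcal W_{-k}$, $\mathcal W_{k+1}$, $\mathcal G_{k+1}$, $\tilde{\mathcal G}_{k+1}$ around the commuting diagrams of Lemma \ref{imani}, which is precisely the three-step substitution you perform using Definition \ref{worc} and the recorded actions of $\sigma$ and $\dagger$ on the alternating generators of $\mathcal O_q$. The proposal is correct and takes essentially the same approach.
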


\begin{proof}
Chase $\mathcal W_{-k}$, $\mathcal W_{k+1}$,  $\mathcal G_{k+1}$, and $\tilde{\mathcal G}_{k+1}$ around the diagrams in Lemma \ref{imani}.
\end{proof}



\section{The quantum torus}

In this section we consider an algebra called the \text{quantum torus}, denoted $T_q$. We review some properties of $T_q$ and display a basis for the vector space $T_q$. Then we introduce an algebra homomorphism $p: O_q \mapsto T_q$ and consider the $p$-images of the alternating elements of $O_q$.

\begin{definition} 
\rm (See \cite{Magic}.)
Define the algebra $T_q$ by generators
$$x,y,x^{-1},y^{-1}$$
and relations
\begin{align}
xx^{-1}=1=x^{-1}x, \qquad \qquad
yy^{-1} = 1 = y^{-1}y, \qquad \qquad xy=q^2yx. \label{aldo}
\end{align}

The algebra $T_q$ is called the \textit{quantum torus}. 
\end{definition}

\begin{lemma} \label{lem:3facts}
The following relations hold in $T_q$:
\begin{align}
xy &= q^2yx, \nonumber \\
x^{-1}y &= q^{-2}yx^{-1}, \nonumber\\
x^{-1}y^{-1} &= q^2y^{-1}x^{-1}, \nonumber \\
xy^{-1} &= q^{-2}y^{-1}x. \label{eq:8p4} \nonumber
\end{align}
\end{lemma}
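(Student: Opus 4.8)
The plan is to obtain all four relations from the single defining relation $xy = q^2 yx$ in \eqref{aldo}, together with the inversion relations $x^{-1}x = xx^{-1} = 1$ and $y^{-1}y = yy^{-1} = 1$. The first relation in the list is exactly \eqref{aldo}, so it requires no argument. For each of the remaining three, the uniform idea is to take a relation already in hand and ``conjugate'' it by an inverse generator: multiply on the left and on the right by $x^{-1}$ (or by $y^{-1}$), and then collapse the resulting products of a generator with its inverse using the inversion relations.

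Concretely, to get $x^{-1}y = q^{-2}yx^{-1}$ I would multiply $xy = q^2 yx$ on the left and on the right by $x^{-1}$: the left-hand side becomes $(x^{-1}x)\,y\,x^{-1} = yx^{-1}$, the right-hand side becomes $q^2\,x^{-1}y\,(xx^{-1}) = q^2 x^{-1}y$, and solving for $x^{-1}y$ yields the claim. To get $xy^{-1} = q^{-2}y^{-1}x$ I would instead flank $xy = q^2 yx$ by $y^{-1}$ on both sides, so that the two copies of $y$ cancel and one is left with $y^{-1}x = q^2 xy^{-1}$. Finally, the relation $x^{-1}y^{-1} = q^2 y^{-1}x^{-1}$ can be produced by applying the same $y^{-1}$-flanking procedure to the already-derived relation $x^{-1}y = q^{-2}yx^{-1}$ (equivalently, by flanking the original relation simultaneously by $x^{-1}$ and $y^{-1}$).

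There is no genuine obstacle here: the whole argument is a short bookkeeping exercise in the inversion relations, and the only point requiring care is tracking whether the factor of $q^2$ is preserved or inverted when one solves for the desired ordering of the generators. An alternative, even more mechanical route would be to verify each identity directly against the basis $\{x^i y^j \mid i,j \in \mathbb{Z}\}$ quoted from \cite{Magic}, but the conjugation argument above is shorter and avoids invoking the basis at all.
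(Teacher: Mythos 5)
Your proposal is correct and is precisely the argument the paper has in mind: its proof reads ``Routine consequence of the relations in \eqref{aldo},'' and your conjugation-by-inverses computation is the standard way to carry that out. All four $q$-factors check out, so nothing further is needed.
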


\begin{proof}
Routine consequence of the relations in \eqref{aldo}.
\end{proof}







We display a basis for the vector space $T_q$.

\begin{lemma}{\rm (See \cite[p.~3]{Magic}.)} \label{iodine}
The vector space $T_q$ has a basis consisting of $\{x^ay^b | a,b \in \mathbb Z\}$.
\end{lemma}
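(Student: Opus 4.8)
The plan is to verify the two defining properties of a basis separately: that $\{x^a y^b \mid a,b \in \Z\}$ spans $T_q$, and that it is linearly independent. The spanning property is routine. Every element of $T_q$ is a $K$-linear combination of finite products of the generators $x, y, x^{-1}, y^{-1}$, so it suffices to rewrite each such product in the form $x^a y^b$. Using Lemma \ref{lem:3facts} one moves every factor of $y^{\pm 1}$ to the right of every factor of $x^{\pm 1}$, at the cost of a scalar power of $q^2$ for each transposition; then the relations $x x^{-1} = x^{-1} x = 1$ and $y y^{-1} = y^{-1} y = 1$ collapse the accumulated $x$-factors into a single $x^a$ and the $y$-factors into a single $y^b$. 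Hence each product equals a scalar multiple of some $x^a y^b$, and the monomials span.

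The linear independence is the main obstacle, and I would handle it with a $\Z^2$-grading rather than a single representation. Realize $T_q$ as the free algebra on $x, y, x^{-1}, y^{-1}$ modulo the ideal $I$ generated by the relations \eqref{aldo}. Assign bidegrees $\deg x = (1,0)$, $\deg x^{-1} = (-1,0)$, $\deg y = (0,1)$, $\deg y^{-1} = (0,-1)$, so that the free algebra becomes $\Z^2$-graded. Each defining relation is homogeneous: $x x^{-1} - 1$ and $y y^{-1} - 1$ have bidegree $(0,0)$, while $xy - q^2 yx$ has bidegree $(1,1)$. Therefore $I$ is a graded ideal and the grading descends to $T_q$. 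The monomial $x^a y^b$ is homogeneous of bidegree $(a,b)$, and distinct pairs $(a,b)$ give distinct bidegrees; consequently, in any relation $\sum_{a,b} c_{a,b} x^a y^b = 0$ each homogeneous component $c_{a,b} x^a y^b$ must vanish separately, reducing the problem to showing that $x^a y^b \neq 0$ in $T_q$ for every $(a,b)$.

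To show each monomial is nonzero I would exhibit a representation on which it acts nontrivially. Let $V = \bigoplus_{n \in \Z} K v_n$ and define linear operators by $x \cdot v_n = v_{n+1}$, $x^{-1} \cdot v_n = v_{n-1}$, $y \cdot v_n = q^{-2n} v_n$, and $y^{-1} \cdot v_n = q^{2n} v_n$. A direct check shows these respect all relations in \eqref{aldo}---in particular $xy \cdot v_n = q^{-2n} v_{n+1} = q^2 yx \cdot v_n$---so they define an algebra homomorphism $\pi: T_q \to \End(V)$. Since $\pi(x^a y^b) v_0 = v_a \neq 0$, we conclude $x^a y^b \neq 0$, which by the previous paragraph forces every $c_{a,b} = 0$ and completes the argument.

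I expect the subtle point to be the linear independence, and in particular the reason a grading is preferable to relying on $\pi$ alone: evaluating $\pi\bigl(\sum_{a,b} c_{a,b} x^a y^b\bigr)$ on $v_n$ yields the conditions $\sum_b c_{a,b} q^{-2nb} = 0$ for all $n$, which force $c_{a,b} = 0$ by a Vandermonde argument only when the scalars $q^{-2b}$ are distinct, i.e. when $q^2$ is not a root of unity. The grading argument avoids this restriction, so it establishes the basis for every nonzero $q \in K$.
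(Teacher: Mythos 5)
Your argument is correct and complete, but note that the paper does not actually prove this lemma: it is quoted from the literature (the citation to Gupta's paper on quantum Laurent polynomials), so there is no internal proof to compare against. What you supply is a self-contained verification, and it is the standard one: spanning by straightening words with the commutation relations of Lemma \ref{lem:3facts}, and independence by observing that the defining relations \eqref{aldo} are homogeneous for the $\Z^2$-grading with $\deg x = (1,0)$, $\deg y = (0,1)$, so that a dependence relation splits into its graded components, each of which is killed once you know $x^a y^b \neq 0$; your shift-and-scale representation on $\bigoplus_{n\in\Z} K v_n$ settles that last point. Your closing remark is the genuinely valuable part of the write-up: the paper only assumes $q \in K$ is nonzero, so $q^2$ may be a root of unity, and in that case the representation $\pi$ alone cannot separate the coefficients $c_{a,b}$ in the $b$-index (the Vandermonde matrix degenerates); the grading is what makes the proof valid in the generality the paper actually needs. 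If you wanted to avoid the grading entirely, you could instead realize $T_q$ as the group algebra of $\Z^2$ twisted by a $2$-cocycle (or invoke a diamond-lemma/PBW argument), but your route is as elementary as any.
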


\begin{definition} \label{label}
\rm
For the algebra $T_q$, define
\begin{equation} \label{xyz}
    w_0 = x + x^{-1}, \qquad w_1 = y + y^{-1}.
\end{equation}
\end{definition}
We describe how $w_0$ and $w_1$ are related.

\begin{lemma} \label{ww}
In the algebra $T_q,$ we have
\begin{align}[{w}_0,[{w}_0,{w}_1]_q]_{q^{-1}} &= -(q^2-q^{-2})^2{w}_1, \label{abbottp} \\
[{w}_1,[{w}_1,{w}_0]_q]_{q^{-1}} &= -(q^2-q^{-2})^2{w}_0. \label{costellop}
\end{align}

\end{lemma}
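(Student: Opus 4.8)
The plan is to verify the two identities \eqref{abbottp} and \eqref{costellop} by direct computation in the basis $\{x^a y^b\}$ of Lemma \ref{iodine}, using the commutation rules collected in Lemma \ref{lem:3facts}. Since $\sigma$ interchanges the roles of $x$ and $y$ (equivalently, $w_0 \leftrightarrow w_1$), and the two relations are images of one another under this symmetry, it suffices to prove \eqref{abbottp}; then \eqref{costellop} follows by the same calculation with the roles of $x$ and $y$ swapped. So I would focus entirely on the first identity.

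First I would compute the inner $q$-commutator $[w_0, w_1]_q = q\,w_0 w_1 - q^{-1} w_1 w_0$. Expanding $w_0 = x + x^{-1}$ and $w_1 = y + y^{-1}$ and moving all $x$-powers to the left using $x^{\pm 1} y^{\pm 1} = q^{\mp 2} y^{\pm 1} x^{\pm 1}$ and its variants from Lemma \ref{lem:3facts}, each of the four cross terms $x^{\epsilon} y^{\delta}$ (with $\epsilon, \delta \in \{+1, -1\}$) acquires a scalar, and one finds that the terms with $\epsilon \delta = +1$ (namely $xy$ and $x^{-1}y^{-1}$) combine with a factor $(q - q^{-1})$ while the terms with $\epsilon \delta = -1$ combine with a factor $(q - q^{-1})$ as well, up to sign; in any case $[w_0,w_1]_q$ is an explicit $K$-linear combination of the four monomials $xy, xy^{-1}, x^{-1}y, x^{-1}y^{-1}$. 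I would record this intermediate expression cleanly.

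Next I would form the outer commutator. Writing $A = [w_0, w_1]_q$ from the previous step, I would compute $[w_0, A]_{q^{-1}} = q^{-1} w_0 A - q\, A w_0$ by again expanding $w_0 = x + x^{-1}$ and normal-ordering every resulting monomial $x^a y^b$ into the standard basis form. The key mechanical point is that conjugating a monomial $x^a y^b$ by $x^{\pm 1}$ multiplies it by a power of $q^{\mp 2b}$, so each term picks up a controlled scalar; collecting the coefficient of each basis monomial $x^a y^b$ and simplifying the resulting Laurent polynomials in $q$ should cause the $x^{\pm 2} y^{\pm 1}$ and similar ``spread'' terms to cancel, leaving only the monomials $y$ and $y^{-1}$, i.e. a scalar multiple of $w_1$. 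Verifying that this scalar equals $-(q^2 - q^{-2})^2$ is the crux.

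The main obstacle is purely bookkeeping: there are many monomials to normal-order and the $q$-scalars must be tracked exactly, since the whole result hinges on a precise cancellation of the non-$w_1$ terms and on the surviving coefficient matching $-(q^2 - q^{-2})^2 = -(q-q^{-1})^2(q+q^{-1})^2$. I would organize the computation by grouping the eight monomials of $w_0 A$ and $A w_0$ according to their $(a,b)$ exponent type and checking cancellation type-by-type, rather than expanding blindly. Once \eqref{abbottp} is established, \eqref{costellop} requires no new work: applying the $x \leftrightarrow y$ symmetry (which sends $q^2 \mapsto q^{-2}$ in the defining relation but leaves $(q^2-q^{-2})^2$ invariant) to the verified identity yields the second relation immediately.
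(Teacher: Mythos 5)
Your proposal is correct and matches the paper's proof, which likewise just expands $w_0=x+x^{-1}$, $w_1=y+y^{-1}$ and simplifies with the commutation rules of Lemma \ref{lem:3facts}; the computation does work out as you predict, since $[w_0,w_1]_q=(q-q^{-3})(xy+x^{-1}y^{-1})$ and the $x^{\pm2}y^{\pm1}$ terms cancel in the outer bracket. Your shortcut of deducing \eqref{costellop} from \eqref{abbottp} via the $x\leftrightarrow y$, $q\mapsto q^{-1}$ symmetry is a harmless minor variation (valid because $[u,[u,v]_q]_{q^{-1}}=u^2v-(q^2+q^{-2})uvu+vu^2$ is invariant under $q\mapsto q^{-1}$), whereas the paper simply computes both identities directly.
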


\begin{proof} 
Write \eqref{abbottp} and \eqref{costellop} in terms of $x$ and $y$ using \eqref{xyz} and simplify the result using Lemma \ref{lem:3facts}.
\end{proof}

\begin{remark}
\rm
A variation of Definition \ref{label} and Lemma \ref{ww} can be found in \cite[eqn. (30) and above eqn. (32)]{16}.
\end{remark}

\begin{corollary} \label{barium}
In the algebra $T_q$, we have
\begin{align}
[w_0,[w_0,[w_0,w_1]_q]_{q^{-1}}] &= -(q^2-q^{-2})^2[w_0,w_1], \label{abbott}\\
[w_1,[w_1,[w_1,w_0]_q]_{q^{-1}}] &= -(q^2-q^{-2})^2[w_1,w_0]. \label{costello}
\end{align}
\end{corollary}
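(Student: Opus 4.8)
The plan is to deduce Corollary \ref{barium} directly from Lemma \ref{ww}, rather than re-expanding everything in terms of $x$ and $y$. The key observation is that each side of \eqref{abbott} is obtained from the corresponding side of \eqref{abbottp} by applying the operation $[w_0, -]$, and similarly \eqref{costello} is obtained from \eqref{costellop} by applying $[w_1, -]$. So the strategy is purely formal: take the identity already proved in Lemma \ref{ww}, commute both sides with the appropriate generator, and observe that the resulting equation is exactly what the corollary asserts.

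In more detail, for \eqref{abbott} I would start from \eqref{abbottp}, namely $[w_0,[w_0,w_1]_q]_{q^{-1}} = -(q^2-q^{-2})^2 w_1$. Applying $[w_0, -]$ to both sides and using the linearity of the commutator in its second slot (along with the fact that $[w_0, -]$ commutes with scalar multiplication by the constant $-(q^2-q^{-2})^2$), I get
\begin{equation}
[w_0,[w_0,[w_0,w_1]_q]_{q^{-1}}] = -(q^2-q^{-2})^2 [w_0, w_1], \nonumber
\end{equation}
which is precisely \eqref{abbott}. The same argument with the roles of $w_0$ and $w_1$ interchanged derives \eqref{costello} from \eqref{costellop} by applying $[w_1,-]$. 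The only point that needs a word of justification is that $[u,-]$ is a $K$-linear map for any fixed $u$; this is immediate from the definition $[u,v]=uv-vu$ and the distributivity of multiplication, so no computation in the basis of Lemma \ref{iodine} is required at all.

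I expect there to be essentially no obstacle here, since the corollary is a formal consequence of the lemma and the linearity of the bracket. The one thing to be careful about is the bookkeeping of which bracket is an ordinary commutator $[\,\cdot\,,\cdot\,]$ versus a $q$- or $q^{-1}$-commutator: the outermost bracket in both \eqref{abbott} and \eqml{costello} is an ordinary commutator, matching the outer $[w_0,-]$ or $[w_1,-]$ that I am applying, while the inner $q$- and $q^{-1}$-commutators are carried along unchanged as part of the argument $[w_0,w_1]_q$ inside. Provided the outer operation is genuinely the plain commutator (which it is), the derivation goes through verbatim, and the proof reduces to the single sentence that one applies $[w_0,-]$ (respectively $[w_1,-]$) to both sides of the identities in Lemma \ref{ww}.
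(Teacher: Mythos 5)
Your proposal is correct and matches the paper's own proof exactly: the paper likewise obtains \eqref{abbott} and \eqref{costello} by taking the commutator of $w_0$ (respectively $w_1$) with both sides of \eqref{abbottp} (respectively \eqref{costellop}). Your extra remark on the $K$-linearity of $[u,-]$ is a harmless elaboration of the same one-line argument.
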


\begin{proof}
 To obtain equation \eqref{abbott}, take the commutator of $w_0$ with both sides of equation \eqref{abbottp}. To obtain equation \eqref{costello}, take the commutator of $w_1$ with both sides of equation \eqref{costellop}.
\end{proof}


\begin{proposition} \label{p}
There exists an algebra homomorphism $p : O_q \mapsto T_q$ that sends $W_0 \mapsto w_0$ and $W_1 \mapsto w_1$.
\end{proposition}

\begin{proof}
The relations in Corollary \ref{barium} are the $q$-Dolan-Grady relations.
\end{proof}

\begin{definition} \label{cerium}
\rm
For $k \in \mathbb{N}$, we define the following elements of $T_q$:
\begin{equation} \label{thxgvn}w_{-k} = p(W_{-k}), \quad w_{k+1} = p(W_{k+1}), \quad  g_{k+1} = p({G}_{k+1}), \quad \tilde{g}_{k+1}=p(\tilde{G}_{k+1}). \end{equation}
The elements in \eqref{thxgvn} are called the \textit{alternating elements} of $T_q$.
\end{definition}

\medskip

For notational convenience, we define
\begin{align}
    g_0= -\left(q-q^{-1}\right)\left(q+q^{-1}\right)^2, \qquad \qquad
    \tilde{g}_0=  -\left(q-q^{-1}\right)\left(q+q^{-1}\right)^2.  \nonumber
\end{align}

Since $O_q$ is generated by $W_0$ and $W_1$, the alternating elements of $O_q$ are polynomials in $W_0$ and $W_1$. However, only recursive formulas for these polynomials are known. The situation is different for $T_q$. We will express the alternating elements of $T_q$ in the basis for $T_q$ given in Lemma \ref{iodine}. This will provide attractive closed forms for the alternating elements of $T_q$.

\section{An automorphism and antiautomorphism of $T_q$}

In this section we define an automorphism and an antiautomorphism of $T_q$.
\begin{lemma} \label{promethium}
There exists an automorphism $\tau$ of $T_q$ that sends
$$ x \mapsto y^{-1}, \qquad y \mapsto x, \qquad x^{-1} \mapsto y, \qquad y^{-1} \mapsto x^{-1}.$$
\end{lemma}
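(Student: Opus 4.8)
The plan is to exhibit $\tau$ explicitly on generators and verify it respects the defining relations of $T_q$, which is the standard way to produce an algebra homomorphism out of an algebra given by generators and relations. Since $T_q$ is presented by the generators $x, y, x^{-1}, y^{-1}$ subject to the relations in \eqref{aldo}, it suffices to check that the proposed images satisfy those same relations; the universal property of a presentation then yields a unique algebra homomorphism $\tau: T_q \to T_q$ with the stated action. To conclude that $\tau$ is an \emph{automorphism}, I would separately produce a two-sided inverse, again by specifying it on generators and checking relations.

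First I would record the images: $\tau(x) = y^{-1}$, $\tau(y) = x$, $\tau(x^{-1}) = y$, $\tau(y^{-1}) = x^{-1}$. Then I would verify the three families of relations. For the inverse relations, $\tau(x)\tau(x^{-1}) = y^{-1}y = 1$ and $\tau(x^{-1})\tau(x) = yy^{-1} = 1$, and similarly $\tau(y)\tau(y^{-1}) = x x^{-1} = 1 = \tau(y^{-1})\tau(y)$, so the first two relations in \eqref{aldo} are preserved. The only substantive check is the $q$-commutation relation $xy = q^2 yx$: applying $\tau$ to the left side gives $\tau(x)\tau(y) = y^{-1}x$, and to the right side gives $q^2 \tau(y)\tau(x) = q^2 x y^{-1}$. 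These agree precisely because Lemma \ref{lem:3facts} supplies $xy^{-1} = q^{-2}y^{-1}x$, equivalently $y^{-1}x = q^2 x y^{-1}$. Thus all defining relations are respected and $\tau$ exists as an algebra endomorphism.

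To establish bijectivity, I would define a candidate inverse $\tau'$ on generators by reversing the assignment, namely $\tau'(x) = y$, $\tau'(y) = x^{-1}$, $\tau'(x^{-1}) = y^{-1}$, $\tau'(y^{-1}) = x$, and check by the same procedure that $\tau'$ respects the relations of \eqref{aldo} (the only nontrivial one again reducing to a relation in Lemma \ref{lem:3facts}). Composing, one checks $\tau' \circ \tau$ and $\tau \circ \tau'$ act as the identity on the four generators $x, y, x^{-1}, y^{-1}$; since these generate $T_q$, both composites are the identity automorphism, so $\tau$ is invertible with $\tau^{-1} = \tau'$.

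I do not expect any genuine obstacle here: the content is entirely the single relation check $y^{-1}x = q^2 x y^{-1}$, which is immediate from Lemma \ref{lem:3facts}, together with the bookkeeping for inverses. The mildest subtlety is making sure the images of $x^{-1}$ and $y^{-1}$ are genuinely the inverses of the images of $x$ and $y$ (so that $\tau$ is well-defined as a map on the presentation that freely adjoins formal inverses), but this is exactly what the inverse-relation checks above confirm. In the actual proof I would likely state that the verifications are routine applications of Lemma \ref{lem:3facts} and the presentation of $T_q$, displaying only the $q$-commutation computation in full.
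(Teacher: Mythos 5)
Your proposal is correct and follows essentially the same route as the paper: define $\tau$ on the generators, verify the relations in \eqref{aldo} (the only substantive one being $y^{-1}x = q^2xy^{-1}$ from Lemma \ref{lem:3facts}), and invoke the presentation to get a homomorphism. The only difference is in establishing invertibility: you construct the inverse $\tau'$ explicitly as a second homomorphism, whereas the paper simply observes that $\tau^4$ fixes all four generators and hence equals the identity; both arguments are valid and of comparable length.
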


\begin{proof}
Define \begin{equation} \label{roger} x_{\tau} = y^{-1}, \quad y_{\tau} = x, \quad (x^{-1})_\tau = y, \quad (y^{-1})_{\tau}=x^{-1}. \nonumber \end{equation}
One routinely checks that $x_{\tau}$, $y_{\tau},(x^{-1})_{\tau}$, and $(y^{-1})_{\tau}$ satisfy the defining relations in \eqref{aldo}.

Consequently, there is an algebra homomorphism $\tau:T_q \rightarrow T_q$ that sends

$$x \mapsto x_{\tau}, \qquad y \mapsto y_{\tau}, \qquad x^{-1} \mapsto (x^{-1})_{\tau}, \qquad y^{-1} \mapsto (y^{-1})_{\tau}.$$

Note that $\tau^4$ fixes $x$, $y$, $x^{-1}$, and $y^{-1}$. Hence, $\tau^4$ is the identity map, so the map $\tau$ is invertible. Thus $\tau$ is an automorphism of $T_q$.
\end{proof}





\begin{lemma} \label{samarium}
There exists an antiautomorphism $\ddagger$ of $T_q$ that sends

$$ x \mapsto x^{-1}, \qquad y \mapsto y, \qquad x^{-1} \mapsto x, \qquad y^{-1} \mapsto y^{-1}.$$
\end{lemma}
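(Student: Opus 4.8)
The plan is to mimic the proof of Lemma \ref{promethium} exactly, since the statement has the same structure: we want an antiautomorphism $\ddagger$ of $T_q$ fixing the quantum torus relations, differing only in that $\ddagger$ reverses multiplication. First I would define the candidate images
\begin{equation} \label{ddaggerdef}
x_{\ddagger} = x^{-1}, \qquad y_{\ddagger} = y, \qquad (x^{-1})_{\ddagger} = x, \qquad (y^{-1})_{\ddagger} = y^{-1}. \nonumber
\end{equation}
The key subtlety, compared to Lemma \ref{promethium}, is that an antiautomorphism is by definition an algebra isomorphism $T_q \to T_q^{op}$, equivalently a $K$-linear bijection reversing the order of multiplication (as recorded in the Notations section). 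So to produce $\ddagger$ via the universal property, I would invoke the homomorphism $T_q \to T_q^{op}$ determined by these images: this requires checking that $x_{\ddagger}, y_{\ddagger}, (x^{-1})_{\ddagger}, (y^{-1})_{\ddagger}$ satisfy the defining relations \eqref{aldo} but with multiplication taken in $T_q^{op}$, i.e.\ with the order reversed.

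The central verification is the $q$-commutation relation. In $T_q^{op}$, the product of $x_{\ddagger}$ and $y_{\ddagger}$ in the order required by $xy = q^2 yx$ must be checked. Concretely, applying $\ddagger$ to $xy = q^2 yx$ and reversing order means I must confirm that $y_{\ddagger} x_{\ddagger} = q^2 x_{\ddagger} y_{\ddagger}$ holds in $T_q$, that is $y x^{-1} = q^2 x^{-1} y$. This is exactly the second relation in Lemma \ref{lem:3facts} (rearranged), so it holds. The two inverse relations $x_{\ddagger}(x^{-1})_{\ddagger} = 1 = (x^{-1})_{\ddagger} x_{\ddagger}$ and the analogous relations for $y$ are immediate since $x^{-1}$ and $x$ are mutually inverse and $y, y^{-1}$ are fixed. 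Thus the assignment extends to an algebra homomorphism $\ddagger: T_q \to T_q^{op}$.

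It remains to show $\ddagger$ is a bijection, hence an antiautomorphism. Following the pattern of Lemma \ref{promethium}, I would compute $\ddagger^2$ on the generators: $\ddagger$ swaps $x \leftrightarrow x^{-1}$ and fixes $y, y^{-1}$, so $\ddagger^2$ fixes all four generators and is therefore the identity. Hence $\ddagger$ is its own inverse and in particular invertible, so it is the desired antiautomorphism. The main obstacle, such as it is, is purely bookkeeping: keeping the order-reversal straight when verifying that the $q$-commutation relation is respected in $T_q^{op}$, since that is the one relation where the opposite multiplication genuinely matters and where a sign or exponent error in the power of $q$ could creep in. Once the correct instance of Lemma \ref{lem:3facts} is identified, the verification is routine.
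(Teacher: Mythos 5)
Your proposal is correct and follows essentially the same route as the paper: the same candidate images, the same reduction to checking the defining relations in $T_q^{\rm op}$ (with the key verification $yx^{-1}=q^{2}x^{-1}y$ coming from Lemma \ref{lem:3facts}), and the same conclusion that $\ddagger^{2}=\mathrm{id}$ forces bijectivity. No gaps.
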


\begin{proof}
Define
$$x_{\ddagger} = x^{-1}, \qquad y_{\ddagger} = y, \qquad (x^{-1})_{\ddagger} = x, \qquad (y^{-1})_{\ddagger} = y^{-1}.$$

The opposite algebra $T^{\rm op}_q$ is explained in Section 2. By this explanation and \eqref{aldo}, the defining relations of $T_q^{\rm op}$ are

\begin{equation} \label{talbot}
xx^{-1} = 1 = x^{-1}x, \qquad yy^{-1} = 1 = y^{-1}y, \qquad yx = q^2 xy. \nonumber \end{equation}

These relations are satisfied by $x_{\ddagger}, y_{\ddagger}, x^{-1}_{\ddagger}, y^{-1}_{\ddagger}$.

Therefore, there is an algebra homomorphism $\ddagger:T_q \rightarrow T_q^{\rm op}$ that sends
$$x \mapsto x_{\ddagger}, \qquad y \mapsto y_{\ddagger}, \qquad x^{-1} \mapsto (x^{-1})_{\ddagger}, \qquad y^{-1} \mapsto (y^{-1})_{\ddagger}.$$

Note that $\ddagger^2$ fixes $x$, $y$, $x^{-1}$, and $y^{-1}$, so $\ddagger^2$ is the identity map of $T_q$. It follows that $\ddagger$ is a bijection and hence an antiautomorphism of $T_q$.

\end{proof}


\begin{lemma} \label{europium}
These diagrams commute:

\begin{center}
\begin{tikzcd}
O_q \arrow[r,"\sigma"] \arrow[d,"p"'] & O_q \arrow[d,"p"] \\
T_q \arrow[r,"\tau"'] & T_q 
\end{tikzcd}
\qquad \qquad
\begin{tikzcd}
O_q \arrow[r,"\dagger"] \arrow[d,"p"'] & O_q \arrow[d,"p"] \\
T_q \arrow[r,"\ddagger"'] & T_q 
\end{tikzcd}
\end{center}
\end{lemma}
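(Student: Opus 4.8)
The plan is to exploit the fact that $O_q$ is generated by $W_0$ and $W_1$ (Definition of $O_q$), so that each diagram can be verified by chasing these two generators around it. First I would record that the two composites in the left diagram, namely $p \circ \sigma$ and $\tau \circ p$, are both algebra homomorphisms $O_q \to T_q$, each being a composite of homomorphisms. By contrast, the two composites in the right diagram, $p \circ \dagger$ and $\ddagger \circ p$, are both antihomomorphisms, each being a homomorphism composed with an antiautomorphism. In either case, two such maps that agree on the generating set $\{W_0, W_1\}$ must agree everywhere: for homomorphisms this is standard, and for antihomomorphisms it follows from $f(s_1 \cdots s_n) = f(s_n) \cdots f(s_1)$, which shows that an antihomomorphism is determined by its values on generators.

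For the left diagram, I would use that $\sigma$ swaps $W_0 \leftrightarrow W_1$ (Lemma \ref{molybdenum}), that $p$ sends $W_0 \mapsto x + x^{-1}$ and $W_1 \mapsto y + y^{-1}$ (Proposition \ref{p} and Definition \ref{label}), and that $\tau$ acts as in Lemma \ref{promethium}. Chasing $W_0$, one path gives $p(\sigma(W_0)) = p(W_1) = y + y^{-1}$, while the other gives $\tau(p(W_0)) = \tau(x + x^{-1}) = y^{-1} + y$; these agree, and the computation for $W_1$ is entirely symmetric. For the right diagram, $\dagger$ fixes $W_0$ and $W_1$ (Lemma \ref{technetium}) and $\ddagger$ acts as in Lemma \ref{samarium}; chasing $W_0$ gives $p(\dagger(W_0)) = x + x^{-1}$ against $\ddagger(p(W_0)) = \ddagger(x) + \ddagger(x^{-1}) = x^{-1} + x$, and chasing $W_1$ gives $p(\dagger(W_1)) = y + y^{-1}$ against $\ddagger(p(W_1)) = y + y^{-1}$.

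There is essentially no obstacle here: the content reduces to four one-line verifications on generators. The only point that requires a moment's care is the right diagram, where both composites reverse the order of multiplication; I would make sure to invoke the observation above that an antihomomorphism is pinned down by its action on generators, so that checking $W_0$ and $W_1$ genuinely suffices to establish full commutativity rather than merely commutativity on the degree-one part.
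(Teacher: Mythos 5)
Your proof is correct and follows the same route as the paper, which simply says to chase $W_0$ and $W_1$ around each diagram; you have merely made explicit the (worthwhile) observation that both composites in each square are homomorphisms, respectively antihomomorphisms, and hence determined by their values on the generators $W_0, W_1$.
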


\begin{proof}
Chase $W_0$ and $W_1$ around each diagram.
\end{proof}

\section{Some generating functions}
\indent Our goal for the paper is to write the alternating elements of $T_q$ in the basis from Lemma \ref{iodine}. We do so using generating functions. In this section we define these generating functions
and discuss their properties.

\begin{definition} \label{thallium}
\rm (See \cite[Definition 7.1]{T2}.)
For the algebra $\mathcal O_q$, define the generating functions
\begin{equation} \begin{split} \label{xthallium}
\mathcal  W^-(t) = \sum_{i=0}^{\infty} \mathcal W_{-i}t^i, \qquad & \qquad
\mathcal W^+(t) = \sum_{i=0}^{\infty} \mathcal W_{i+1}t^i, \\
\mathcal  G(t) = \sum_{i=0}^{\infty}  \mathcal G_i t^i, \qquad & \qquad
\tilde{{\mathcal G}}(t) = \sum_{i=0}^{\infty} \tilde{{\mathcal G}}_i t^i.
\end{split} \end{equation}
\end{definition}

\begin{definition} \label{biseo}
\rm (See \cite[Definition 12.1]{T2}.)
For the algebra $ O_q$, define the generating functions
\begin{equation} \begin{split} \label{xbiseo}
 W^-(t) = \sum_{i=0}^{\infty} W_{-i}t^i, \qquad & \qquad
 W^+(t) = \sum_{i=0}^{\infty} W_{i+1}t^i,  \\
 G(t) = \sum_{i=0}^{\infty}  G_i t^i, \qquad & \qquad
\tilde{{G}}(t) = \sum_{i=0}^{\infty} \tilde{{G}}_i t^i. 
\end{split} \end{equation}

\end{definition}

\begin{definition} \label{polonium}
\rm
For the algebra $T_q$, define the generating functions
\begin{equation} \begin{split} \label{xpolonium}
    w^-(t) = \sum_{i=0}^{\infty} w_{-i}t^i, \qquad & \qquad
    w^+(t) = \sum_{i=0}^{\infty} w_{i+1}t^i, \\
    g(t) = \sum_{i=0}^{\infty} g_{i}t^i, \qquad  & \qquad
    \tilde{g}(t) = \sum_{i=0}^{\infty} \tilde{g}_{i}t^i.
\end{split} \end{equation}
\end{definition}

Recall the algebra homomorphism $\gamma$ from Definition \ref{indium}. 

\begin{lemma} 
The map $\gamma$ sends the generating functions in \eqref{xthallium} to the generating functions in \eqref{xbiseo} as follows.
\begin{center}
\begin{tabular}{c | c c c c} 

 \text{argument in} $\mathcal O_q$ & $\mathcal W^-(t)$ & $\mathcal W^+(t)$ & $\mathcal G(t)$ & $\tilde{\mathcal G}(t)$ \\ 
 \hline
 \text{image in} $O_q$ & $W^-(t)$ & $W^+(t)$ & $G(t)$ & $\tilde{G}(t)$ \\

\end{tabular}
\end{center}
\end{lemma}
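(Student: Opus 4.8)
The plan is to regard each generating function as an element of a formal power series ring over the relevant algebra and to push $\gamma$ through the sum coefficientwise. Concretely, since $\gamma : \mathcal O_q \to O_q$ is an algebra homomorphism by Definition \ref{indium}, it is in particular $K$-linear, and it extends to a $K[[t]]$-linear map $\mathcal O_q[[t]] \to O_q[[t]]$ acting by $\sum_{i} a_i t^i \mapsto \sum_{i} \gamma(a_i) t^i$. Under this extension, applying $\gamma$ to any of the generating functions in \eqref{xthallium} amounts to applying $\gamma$ to each coefficient separately, so the lemma reduces to comparing coefficients of like powers of $t$.

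First I would treat $\mathcal W^-(t)$ and $\mathcal W^+(t)$. Applying the coefficientwise $\gamma$ to $\mathcal W^-(t) = \sum_{i=0}^{\infty} \mathcal W_{-i} t^i$ produces $\sum_{i=0}^{\infty} \gamma(\mathcal W_{-i}) t^i$, and by Definition \ref{worc} we have $\gamma(\mathcal W_{-i}) = W_{-i}$ for all $i \in \mathbb N$; hence the image is $\sum_{i=0}^{\infty} W_{-i} t^i = W^-(t)$, as defined in Definition \ref{biseo}. The computation for $\mathcal W^+(t)$ is identical, using the equality $\gamma(\mathcal W_{i+1}) = W_{i+1}$ from Definition \ref{worc}.

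Next I would handle $\mathcal G(t)$ and $\tilde{\mathcal G}(t)$, where a small bookkeeping point arises at the constant ($i=0$) term. For $i \ge 1$, Definition \ref{worc} gives $\gamma(\mathcal G_i) = G_i$ directly. For $i = 0$, the coefficient $\mathcal G_0 = -(q-q^{-1})(q+q^{-1})^2$ is a scalar multiple of the identity of $\mathcal O_q$; since $\gamma$ is a unital algebra homomorphism it fixes this scalar, and the scalar equals $G_0$ by the convention recorded in Definition \ref{worc}. Thus $\gamma(\mathcal G_0) = G_0$ as well, and $\gamma(\mathcal G(t)) = \sum_{i=0}^{\infty} G_i t^i = G(t)$. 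The same argument, using $\gamma(\tilde{\mathcal G}_i) = \tilde G_i$ for $i \ge 1$ and $\gamma(\tilde{\mathcal G}_0) = \tilde G_0$, yields $\gamma(\tilde{\mathcal G}(t)) = \tilde G(t)$.

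The argument is essentially a definitional unwinding, so no serious obstacle is expected; the only place requiring care is the constant term of $\mathcal G(t)$ and $\tilde{\mathcal G}(t)$, where one must appeal to unitality of $\gamma$ together with the matching scalar conventions for $\mathcal G_0, \tilde{\mathcal G}_0$ and $G_0, \tilde G_0$, rather than to the identity $\gamma(\mathcal G_i) = G_i$, which is stated only for $i \ge 1$.
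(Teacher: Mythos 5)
Your proposal is correct and follows essentially the same route as the paper, whose entire proof is ``By Definition \ref{worc}'': you simply apply $\gamma$ coefficientwise and invoke that definition, with the added (and accurate) observation that the $i=0$ terms of $\mathcal G(t)$ and $\tilde{\mathcal G}(t)$ are handled by unitality of $\gamma$ and the matching scalar conventions for $\mathcal G_0, \tilde{\mathcal G}_0, G_0, \tilde G_0$. No issues.
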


\begin{proof}
By Definition \ref{worc}.
\end{proof}

Recall the algebra homomorphism $p$ from Proposition \ref{p}.

\begin{lemma}
The map $p$ sends the generating functions in \eqref{xbiseo}  to the generating functions in \eqref{xpolonium} as follows.

\begin{center}
\begin{tabular}{c | c c c c} 

 \text{argument in} $O_q$ & $W^-(t)$ & $W^+(t)$ & $G(t)$ & $\tilde{G}(t)$ \\ 
 \hline
 \text{image in} $T_q$ & $w^-(t)$ & $w^+(t)$ & $g(t)$ & $\tilde{g}(t)$ \\

\end{tabular}
\end{center}

\end{lemma}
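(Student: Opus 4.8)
The plan is to recognize that $p$, being a $K$-algebra homomorphism, is $K$-linear, and to let it act on a formal power series in $t$ by applying it to each coefficient; this gives the natural extension of $p$ carrying formal power series with coefficients in $O_q$ to formal power series with coefficients in $T_q$. Each generating function in \eqref{xbiseo} then has a well-defined image, computed termwise.

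First I would compute the image of $W^-(t)$. By $K$-linearity,
$$p\!\left(\sum_{i=0}^{\infty} W_{-i}\,t^i\right) = \sum_{i=0}^{\infty} p(W_{-i})\,t^i,$$
and Definition \ref{cerium} gives $p(W_{-i}) = w_{-i}$ for every $i \ge 0$, so the right-hand side is exactly $w^-(t)$. The computation for $W^+(t)$ is identical, using $p(W_{i+1}) = w_{i+1}$; note that for these two series every coefficient is covered uniformly by Definition \ref{cerium}, including the leading terms coming from $W_0$ and $W_1$.

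Next I would treat $G(t)$ and $\tilde{G}(t)$, where a single extra remark is needed for the constant term. For $i \ge 1$ we have $p(G_i) = g_i$ and $p(\tilde{G}_i) = \tilde{g}_i$ by Definition \ref{cerium}. For $i = 0$, the coefficients $G_0$ and $\tilde{G}_0$ are the scalar $-(q-q^{-1})(q+q^{-1})^2$ fixed in Definition \ref{worc}; since $p$ is unital and $K$-linear it fixes this scalar, and the scalar coincides with $g_0$ and $\tilde{g}_0$ from Definition \ref{cerium}, so $p(G_0) = g_0$ and $p(\tilde{G}_0) = \tilde{g}_0$. Assembling the coefficients then yields $p(G(t)) = g(t)$ and $p(\tilde{G}(t)) = \tilde{g}(t)$.

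I do not expect a genuine obstacle: the statement is a formal bookkeeping consequence of Definition \ref{cerium} together with the $K$-linearity of $p$. The only point that requires any attention is verifying that $p$ respects the normalizing convention for the constant terms of the $G$ and $\tilde{G}$ generating functions, i.e.\ that $p(G_0) = g_0$ and $p(\tilde{G}_0) = \tilde{g}_0$.
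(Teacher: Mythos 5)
Your argument is correct and is essentially the paper's own proof, which simply cites Definition \ref{cerium}; you have merely spelled out the termwise application of the $K$-linear, unital map $p$ to the power series. Your extra remark that $p$ fixes the scalar constant terms $G_0$ and $\tilde G_0$, so that $p(G_0)=g_0$ and $p(\tilde G_0)=\tilde g_0$, is a worthwhile detail the paper leaves implicit.
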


\begin{proof}
By Definition \ref{cerium}.
\end{proof}

Next, we express the relations \eqref{eq:3p1}--\eqref{eq:3p11}  in terms of generating functions. 

\begin{lemma} {\rm (See \cite[Lemma 7.2]{T2}.)} The following relations hold in the algebra $\mathcal O_q$.
\begin{align}
&
 \lbrack \mathcal W_0, \mathcal W^+(t)\rbrack= 
\lbrack \mathcal W^-(t), \mathcal W_{1}\rbrack=
t^{-1}({\mathcal{\tilde G}(t)} - \mathcal G(t))/(q+q^{-1}),
\label{eq:4p1}
\\
&
\lbrack \mathcal W_0, \mathcal G(t)\rbrack_q= 
\lbrack {\mathcal{\tilde G}(t)}, \mathcal W_{0}\rbrack_q= 
\rho  \mathcal W^-(t)-\rho t
 \mathcal W^+(t),
\label{eq:4p2}
\\
&
\lbrack \mathcal G(t), \mathcal W_{1}\rbrack_q= 
\lbrack \mathcal W_{1}, {\mathcal {\tilde G}}(t)\rbrack_q= 
\rho  \mathcal W^+(t)-\rho t
 \mathcal W^-(t),
\label{eq:4p3}
\\
&
\lbrack \mathcal W^-(s), \mathcal W^-(t)\rbrack=0,  \qquad 
\lbrack \mathcal W^+(s), \mathcal W^+(t)\rbrack= 0,
\label{eq:4p4}
\\
&
\lbrack \mathcal W^-(s), \mathcal W^+(t)\rbrack+
\lbrack \mathcal W^+(s), \mathcal W^-(t)\rbrack= 0,
\label{eq:4p5}
\\
& s
\lbrack \mathcal W^-(s), \mathcal G(t)\rbrack+t
\lbrack \mathcal G(s), \mathcal W^-(t)\rbrack= 0,
\label{eq:4p6}
\\
&
s\lbrack \mathcal W^-(s), {\mathcal {\tilde G}}(t)\rbrack+t
\lbrack {\mathcal {\tilde G}}(s), \mathcal W^-(t)\rbrack= 0,
\label{eq:4p7}
\\
&
s\lbrack \mathcal W^+(s), \mathcal G(t)\rbrack+t
\lbrack  \mathcal  G(s), \mathcal W^+(t)\rbrack= 0,
\label{eq:4p8}
\\
&
s \lbrack \mathcal W^+(s), {\mathcal {\tilde G}}(t)\rbrack+t
\lbrack {\mathcal {\tilde G}}(s), \mathcal W^+(t)\rbrack= 0,
\label{eq:4p9}
\\
&
\lbrack \mathcal G(s), \mathcal G(t)\rbrack=0,
\qquad 
\lbrack {\mathcal {\tilde G}}(s), {\mathcal {\tilde G}}(t)\rbrack= 0,
\label{eq:4p10}
\\
&
\lbrack {\mathcal {\tilde G}}(s), \mathcal G(t)\rbrack+
\lbrack \mathcal G(s), {\mathcal {\tilde G}}(t)\rbrack= 0.
\label{eq:4p11}
\end{align}
\end{lemma}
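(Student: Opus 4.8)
The plan is to derive each generating-function relation \eqref{eq:4p1}--\eqref{eq:4p11} from its componentwise counterpart in \eqref{eq:3p1}--\eqref{eq:3p11} by multiplying through by the indicated monomial in $s$ and $t$ and summing over the free indices $k,\ell \in \mathbb{N}$. Since both the commutator and the $q$-commutator are bilinear, these operations pass through the infinite sums defining $\mathcal W^\pm(t), \mathcal G(t), \tilde{\mathcal G}(t)$ in \eqref{xthallium}, so the content of the lemma is purely bookkeeping: matching the power of $t$ (and $s$) attached to each generator with its index, reassembling the sums into the generating functions, and checking that the boundary terms behave. The only two algebraic facts I would record at the outset are the scalar identities $\tilde{\mathcal G}_0 = \mathcal G_0$ and $(q-q^{-1})\mathcal G_0 = (q-q^{-1})\tilde{\mathcal G}_0 = -(q^2-q^{-2})^2 = \rho$, both immediate from the conventions for $\mathcal G_0, \tilde{\mathcal G}_0$ fixed just after Definition \ref{note}.

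First I would dispose of the ``pure'' relations \eqref{eq:4p4}, \eqref{eq:4p5}, \eqref{eq:4p10}, which carry no scalar contributions. For \eqref{eq:4p4} I expand $[\mathcal W^-(s),\mathcal W^-(t)] = \sum_{k,\ell\ge 0}[\mathcal W_{-k},\mathcal W_{-\ell}]\,s^k t^\ell$ and apply \eqref{eq:3p31} termwise, with \eqref{eq:3p4} handling the $\mathcal W^+$ case. For \eqref{eq:4p5} I read off the coefficient of $s^k t^\ell$ in the sum and recognize it as the left side of \eqref{eq:3p5}. Relation \eqref{eq:4p10} follows from \eqref{eq:3p32} and \eqref{eq:3p10}, once one notes that the index-$0$ terms $\mathcal G_0, \tilde{\mathcal G}_0$ are scalars and so commute with everything.

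Next I would treat \eqref{eq:4p6}--\eqref{eq:4p9} and \eqref{eq:4p11}, where the prefactors $s$ and $t$ are exactly what is needed to shift indices into alignment. Taking \eqref{eq:4p6} as the template, I expand $s[\mathcal W^-(s),\mathcal G(t)] + t[\mathcal G(s),\mathcal W^-(t)]$, extract the coefficient of $s^a t^b$, and observe that for $a,b\ge 1$ it coincides with the left side of \eqref{eq:3p6} with $k=a-1,\ \ell=b-1$, while the boundary coefficients ($a=0$ or $b=0$) involve $[\mathcal G_0,\mathcal W_{-\ell}]$ or $[\mathcal W_{-k},\mathcal G_0]$ and hence vanish by centrality of $\mathcal G_0$. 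The same argument, with the appropriate generators, covers \eqref{eq:4p7}, \eqref{eq:4p8}, \eqref{eq:4p9}, and \eqref{eq:4p11}.

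The subtlest part will be \eqref{eq:4p1}--\eqref{eq:4p3}, where the conventions for $\mathcal G_0, \tilde{\mathcal G}_0$ do real work. For \eqref{eq:4p1}, summing \eqref{eq:3p1} against $t^k$ gives $\frac{1}{q+q^{-1}}\sum_{k\ge 0}(\tilde{\mathcal G}_{k+1}-\mathcal G_{k+1})t^k = \frac{t^{-1}}{q+q^{-1}}\big((\tilde{\mathcal G}(t)-\tilde{\mathcal G}_0)-(\mathcal G(t)-\mathcal G_0)\big)$, and the $t^{-1}$ boundary terms cancel precisely because $\tilde{\mathcal G}_0=\mathcal G_0$, leaving $t^{-1}(\tilde{\mathcal G}(t)-\mathcal G(t))/(q+q^{-1})$. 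For \eqref{eq:4p2} and \eqref{eq:4p3}, the $q$-commutator applied to the index-$0$ term produces $[\tilde{\mathcal G}_0,\mathcal W_0]_q=(q-q^{-1})\tilde{\mathcal G}_0\mathcal W_0$ (resp. $[\mathcal G_0,\mathcal W_1]_q=(q-q^{-1})\mathcal G_0\mathcal W_1$), which by the identity $(q-q^{-1})\mathcal G_0=\rho$ is exactly the $\rho\mathcal W_0$ (resp. $\rho\mathcal W_1$) term needed to complete $\rho\mathcal W^-(t)-\rho t\,\mathcal W^+(t)$ (resp. $\rho\mathcal W^+(t)-\rho t\,\mathcal W^-(t)$). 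Thus the single nontrivial step is verifying that the two scalar identities make these boundary contributions conspire correctly; with those in hand, every relation follows by the routine index bookkeeping above.
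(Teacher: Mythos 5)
Your proof is correct: the termwise expansion, the index shifts supplied by the $s$ and $t$ prefactors, and the two scalar identities $\tilde{\mathcal G}_0=\mathcal G_0$ and $(q-q^{-1})\mathcal G_0=\rho$ are exactly what make the boundary terms work out in \eqref{eq:4p1}--\eqref{eq:4p3}. The paper itself gives no proof, deferring entirely to the citation of \cite[Lemma 7.2]{T2}, and your derivation is the standard one carried out there, so there is nothing to compare beyond noting that you have filled in the omitted bookkeeping correctly.
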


In these equations, $\rho = -(q^2-q^{-2})^2$. 
\medskip
We turn our attention to $O_q$.
\begin{lemma}

The following equations hold in the algebra $O_q$.
\begin{align}
&
 \lbrack W_0, W^+(t)\rbrack= 
\lbrack W^-(t), W_{1}\rbrack=
t^{-1}({{\tilde G}(t)} - G(t))/(q+q^{-1}),
\label{eq:6p1}
\\
&
\lbrack W_0, G(t)\rbrack_q= 
\lbrack {{\tilde G}(t)}, W_{0}\rbrack_q= 
\rho   W^-(t)-\rho t
W^+(t),
\label{eq:6p2}
\\
&
\lbrack  G(t),  W_{1}\rbrack_q= 
\lbrack  W_{1}, { {\tilde G}}(t)\rbrack_q= 
\rho   W^+(t)-\rho t
  W^-(t),
\label{eq:6p3}
\\
&
\lbrack  W^-(s),  W^-(t)\rbrack=0,  \qquad 
\lbrack  W^+(s),  W^+(t)\rbrack= 0,
\label{eq:6p4}
\\
&
\lbrack  W^-(s),  W^+(t)\rbrack+
\lbrack  W^+(s),  W^-(t)\rbrack= 0,
\label{eq:6p5}
\\
& s
\lbrack  W^-(s),  G(t)\rbrack+t
\lbrack  G(s),  W^-(t)\rbrack= 0,
\label{eq:6p6}
\\
&
s\lbrack  W^-(s), { {\tilde G}}(t)\rbrack+t
\lbrack { {\tilde G}}(s),  W^-(t)\rbrack= 0,
\label{eq:6p7}
\\
&
s\lbrack  W^+(s),  G(t)\rbrack+t
\lbrack    G(s),  W^+(t)\rbrack= 0,
\label{eq:6p8}
\\
&
s \lbrack  W^+(s), { {\tilde G}}(t)\rbrack+t
\lbrack { {\tilde G}}(s),  W^+(t)\rbrack= 0,
\label{eq:6p9}
\\
&
\lbrack  G(s),  G(t)\rbrack=0,
\qquad 
\lbrack { {\tilde G}}(s), { {\tilde G}}(t)\rbrack= 0,
\label{eq:6p10}
\\
&
\lbrack {{\tilde G}}(s),  G(t)\rbrack+
\lbrack G(s), {{\tilde G}}(t)\rbrack= 0.
\label{eq:6p11}
\end{align}

\end{lemma}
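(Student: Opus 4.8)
The plan is to obtain \eqref{eq:6p1}--\eqref{eq:6p11} as the $\gamma$-images of the corresponding relations \eqref{eq:4p1}--\eqref{eq:4p11}, which the previous lemma records as holding in $\mathcal{O}_q$. Since \eqref{eq:6p1}--\eqref{eq:6p11} are obtained from \eqref{eq:4p1}--\eqref{eq:4p11} by replacing each calligraphic symbol with its plain counterpart, it will suffice to show that the algebra homomorphism $\gamma$ of Definition \ref{indium} carries each relation in $\mathcal{O}_q$ to the asserted relation in $O_q$, symbol by symbol.

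First I would make precise how $\gamma$ acts on the generating functions. The objects in \eqref{xthallium} and \eqref{xbiseo} live in the formal power series rings $\mathcal{O}_q[[t]]$ and $O_q[[t]]$, and the two-variable relations live in $\mathcal{O}_q[[s,t]]$ and $O_q[[s,t]]$. I would extend $\gamma$ to a homomorphism of these power series rings acting coefficientwise; by the table lemma recording the action of $\gamma$ on \eqref{xthallium}, this extension sends $\mathcal{W}^-(t) \mapsto W^-(t)$, $\mathcal{W}^+(t) \mapsto W^+(t)$, $\mathcal{G}(t) \mapsto G(t)$, and $\tilde{\mathcal{G}}(t) \mapsto \tilde{G}(t)$, while Lemma \ref{pittsburgh} gives $\mathcal{W}_0 \mapsto W_0$ and $\mathcal{W}_1 \mapsto W_1$. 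Because $\gamma$ is $K$-linear and multiplicative, it satisfies $\gamma([u,v]) = [\gamma(u),\gamma(v)]$ and $\gamma([u,v]_r) = [\gamma(u),\gamma(v)]_r$ for every nonzero $r \in K$, and it commutes with multiplication by the scalars $s$, $t$, $t^{-1}$, $\rho$, and $(q+q^{-1})^{-1}$. Applying $\gamma$ to each of \eqref{eq:4p1}--\eqref{eq:4p11} then reproduces \eqref{eq:6p1}--\eqref{eq:6p11} term by term.

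The one point requiring a moment's care is the factor $t^{-1}$ in \eqref{eq:6p1}: here I would observe that $\tilde{G}(t) - G(t)$ has zero constant term, since $\gamma(\tilde{\mathcal{G}}_0) = \tilde{G}_0 = G_0 = \gamma(\mathcal{G}_0)$ by Definition \ref{worc}, so that $t^{-1}(\tilde{G}(t) - G(t))$ is a genuine element of $O_q[[t]]$ and the right-hand side is well defined. I do not anticipate a serious obstacle: the content of the lemma is simply that the defining relations among the alternating elements survive under the homomorphism $\gamma$, and the only mild subtlety is the bookkeeping involved in extending $\gamma$ coefficientwise to formal power series in $s$ and $t$ and checking that it respects the scalar coefficients and the $r$-commutators appearing in \eqref{eq:4p1}--\eqref{eq:4p11}.
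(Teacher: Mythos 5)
Your proposal is correct and takes exactly the approach of the paper, whose entire proof is ``Apply the map $\gamma$ to relations \eqref{eq:4p1}--\eqref{eq:4p11}.'' The extra care you take in extending $\gamma$ coefficientwise to formal power series and in noting that $\tilde{G}(t)-G(t)$ has zero constant term (so the $t^{-1}$ in \eqref{eq:6p1} is harmless) is sound bookkeeping that the paper leaves implicit.
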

\begin{proof}
Apply the map $\gamma$ to relations \eqref{eq:4p1}--\eqref{eq:4p11}.
\end{proof}

We now turn our attention to $T_q$.

\begin{lemma}
The following equations hold in the algebra $T_q$. 
\begin{align}
&
 \lbrack w_0, w^+(t)\rbrack= 
\lbrack w^-(t), w_{1}\rbrack=
t^{-1}({{\tilde g}(t)} - g(t))/(q+q^{-1}),
\label{eq:9p1}
\\
&
\lbrack w_0, g(t)\rbrack_q= 
\lbrack {{\tilde g}(t)}, w_{0}\rbrack_q= 
\rho   w^-(t)-\rho t
w^+(t),
\label{eq:9p2}
\\
&
\lbrack  g(t),  w_{1}\rbrack_q= 
\lbrack  w_{1}, { {\tilde g}}(t)\rbrack_q= 
\rho   w^+(t)-\rho t
  w^-(t),
\label{eq:9p3}
\\
&
\lbrack  w^-(s),  w^-(t)\rbrack=0,  \qquad 
\lbrack  w^+(s),  w^+(t)\rbrack= 0,
\label{eq:9p4}
\\
&
\lbrack  w^-(s),  w^+(t)\rbrack+
\lbrack  w^+(s),  w^-(t)\rbrack= 0,
\label{eq:9p5}
\\
& s
\lbrack  w^-(s),  g(t)\rbrack+t
\lbrack  g(s),  w^-(t)\rbrack= 0,
\label{eq:9p6}
\\
&
s\lbrack  w^-(s), { {\tilde g}}(t)\rbrack+t
\lbrack { {\tilde g}}(s),  w^-(t)\rbrack= 0,
\label{eq:9p7}
\\
&
s\lbrack  w^+(s),  g(t)\rbrack+t
\lbrack    g(s),  w^+(t)\rbrack= 0,
\label{eq:9p8}
\\
&
s \lbrack  w^+(s), { {\tilde g}}(t)\rbrack+t
\lbrack { {\tilde g}}(s),  w^+(t)\rbrack= 0,
\label{eq:9p9}
\\
&
\lbrack  g(s),  g(t)\rbrack=0,
\qquad 
\lbrack { {\tilde g}}(s), { {\tilde g}}(t)\rbrack= 0,
\label{eq:9p10}
\\
&
\lbrack {{\tilde g}}(s),  g(t)\rbrack+
\lbrack g(s), {{\tilde g}}(t)\rbrack= 0.
\label{eq:9p11}
\end{align}
\end{lemma}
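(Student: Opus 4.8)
The plan is to obtain the relations \eqref{eq:9p1}--\eqref{eq:9p11} by applying the algebra homomorphism $p$ of Proposition \ref{p} to the corresponding relations \eqref{eq:6p1}--\eqref{eq:6p11}, which already hold in $O_q$. This mirrors exactly how the $O_q$ relations were deduced from the $\mathcal{O}_q$ relations by applying $\gamma$. Since $p$ is an algebra homomorphism, it extends coefficientwise to a homomorphism of the power-series rings $O_q[[t]] \to T_q[[t]]$, and likewise $O_q[[s,t]] \to T_q[[s,t]]$ for the two-variable relations \eqref{eq:6p4}--\eqref{eq:6p11}. The lemma immediately preceding this one records that this extension sends the generating functions $W^-(t), W^+(t), G(t), \tilde{G}(t)$ to $w^-(t), w^+(t), g(t), \tilde{g}(t)$ respectively, so it suffices to check that $p$ transports each side of a given $O_q$ relation to the stated $T_q$ expression.

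First I would record that an algebra homomorphism respects both bracket operations used in these relations: for all $u,v$ and nonzero $r \in K$ we have $p([u,v]) = [p(u),p(v)]$ and $p([u,v]_r) = [p(u),p(v)]_r$, the second because $[u,v]_r = ruv - r^{-1}vu$ and $p$ is $K$-linear and multiplicative. The scalars $\rho = -(q^2-q^{-2})^2$ and $(q+q^{-1})^{-1}$, as well as the formal variables $s$ and $t$, are untouched by $p$. Applying $p$ term by term to each of \eqref{eq:6p1}--\eqref{eq:6p11} then produces exactly \eqref{eq:9p1}--\eqref{eq:9p11}.

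The one point that deserves a moment's attention is the factor $t^{-1}$ in \eqref{eq:6p1} and \eqref{eq:9p1}. For $t^{-1}(\tilde{G}(t) - G(t))$ to be a genuine element of $O_q[[t]]$ one needs the constant terms to cancel, which holds since $G_0 = \tilde{G}_0$ by Definition \ref{worc}; after applying $p$ the same cancellation persists because $g_0 = \tilde{g}_0$, so $t^{-1}(\tilde{g}(t) - g(t))$ is well defined. I do not expect any genuine obstacle here: the entire argument is a formal, bookkeeping-level application of the homomorphism $p$ to relations already established in $O_q$, and the only real content is the verification that $p$ commutes with the $r$-commutator and with the coefficientwise passage to generating functions.
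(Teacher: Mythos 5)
Your proposal is correct and matches the paper's own proof, which is exactly the one-line argument of applying the homomorphism $p$ to relations \eqref{eq:6p1}--\eqref{eq:6p11}. The extra details you supply (that $p$ respects the $r$-commutator and that $g_0 = \tilde{g}_0$ makes the $t^{-1}$ factor harmless) are sound elaborations of the same route.
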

\begin{proof}
Apply the map $p$ to relations \eqref{eq:6p1}--\eqref{eq:6p11}.
\end{proof}

\section{The generating function $\omega(t)$}

In this section, we introduce a function that will become useful later on.

\begin{definition} \label{mendelevium}
\rm
Define the generating function

$$\omega(t) = \sum_{i=0}^{\infty} \binom{2i}{i} \left(\frac{t}{q+q^{-1}}\right)^{2i}.$$ 
\end{definition}

Note that $\omega(0)=1$.

\medskip
\noindent \textbf{Remark:} The function $\omega(t)$ is the power series expansion of 

$$\left(1-\frac{4t^2}{(q+q^{-1})^2}\right)^{-\frac{1}{2}}.$$

However, we will not make use of this fact.

\medskip

Define  \begin{equation} \label{aaron} T=\frac{q+q^{-1}}{qt+q^{-1}t^{-1}}, \qquad \qquad S = \frac{q+q^{-1}}{qt^{-1}+q^{-1}t}.\end{equation}

Following \cite{T3}, we express  $T$ and $S$ as power series:  \begin{equation} \label{ts1989} T = (q+q^{-1})\sum_{\ell=0}^{\infty} (-1)^{\ell}q^{2\ell+1}t^{2\ell+1}, \qquad S = (q+q^{-1})\sum_{\ell=0}^{\infty} (-1)^{\ell}q^{-(2\ell+1)}t^{2\ell+1}. \end{equation}

\begin{lemma} \label{55}

The following identities hold:

\begin{equation} \label{ytterbium}
\omega(T) =  \frac{1+q^2t^2}{1-q^2t^2}, \qquad \qquad \omega(S) = \frac{1+q^{-2}t^2}{1-q^{-2}t^2}.
\end{equation} 
\end{lemma}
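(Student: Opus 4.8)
The plan is to evaluate $\omega(T)$ by treating it as a composition of formal power series, and then to obtain $\omega(S)$ essentially for free via the symmetry $q \leftrightarrow q^{-1}$. First I would record that, by \eqref{aaron} and \eqref{ts1989}, the series $u := T/(q+q^{-1}) = 1/(qt+q^{-1}t^{-1}) = qt/(1+q^2t^2)$ is a formal power series in $t$ with zero constant term. Since $\omega$ involves only even powers, $\omega(T) = \sum_{i \ge 0} \binom{2i}{i} u^{2i}$ is then a well-defined formal power series in $t$, and it has constant term $1$ because $u = O(t)$ and $\omega(0)=1$.

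Next I would compute $\omega(T)^2$ rather than $\omega(T)$ directly, so as to avoid the closed form flagged in the Remark. Writing $\omega(t) = \sum_i \binom{2i}{i} Y^i$ with $Y = (t/(q+q^{-1}))^2$ and invoking the central binomial convolution identity $\sum_{j=0}^{i} \binom{2j}{j}\binom{2(i-j)}{i-j} = 4^i$ together with the geometric series, one gets $\omega(T)^2 = \sum_{i \ge 0} 4^i u^{2i} = (1-4u^2)^{-1}$; the summation is legitimate precisely because $4u^2 = O(t^2)$ has zero constant term. A short algebraic simplification then gives $1 - 4u^2 = (1-q^2t^2)^2/(1+q^2t^2)^2$, whence $\omega(T)^2 = \big((1+q^2t^2)/(1-q^2t^2)\big)^2$.

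The step requiring care is extracting the square root, and I expect this to be the only genuinely subtle point. I would argue as follows: the ring of formal power series over $K$ is an integral domain, and both $\omega(T)$ and $(1+q^2t^2)/(1-q^2t^2)$ are power series with constant term $1$. Their difference $D$ and sum $\Sigma$ satisfy $D\Sigma = 0$, while $\Sigma$ has constant term $2 \neq 0$ and is therefore a unit; hence $D = 0$ and $\omega(T) = (1+q^2t^2)/(1-q^2t^2)$. This constant-term argument is exactly where the branch ambiguity inherent in passing from $\omega(T)^2$ to $\omega(T)$ is resolved, and it is what keeps the computation honest without appealing to the $(\,\cdot\,)^{-1/2}$ closed form.

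Finally, to obtain $\omega(S)$ I would observe that $S$ arises from $T$ by the substitution $q \mapsto q^{-1}$ in \eqref{aaron}, while $\omega$ depends on $q$ only through the symmetric quantity $q + q^{-1}$ and is therefore fixed by $q \mapsto q^{-1}$. Applying $q \mapsto q^{-1}$ to the already-established identity $\omega(T) = (1+q^2t^2)/(1-q^2t^2)$ then yields $\omega(S) = (1+q^{-2}t^2)/(1-q^{-2}t^2)$, completing the proof.
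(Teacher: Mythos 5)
Your proof is correct, but it follows a genuinely different route from the paper. The paper works coefficient-by-coefficient: it expands $(1+q^2t^2)^{-1}\omega(T)$ via the Newton binomial theorem, reduces the claim to the identity $c_n=\sum_{i=0}^{n}(-1)^{n-i}\binom{2i}{i}\binom{n+i}{n-i}=1$, and proves that identity by comparing coefficients of $x^n$ in $(1-x)^{-n-1}(1-x)^n=(1-x)^{-1}$; it then dismisses $\omega(S)$ with ``similarly verified.'' You instead square $\omega(T)$, use the central binomial convolution $\sum_{j=0}^{i}\binom{2j}{j}\binom{2(i-j)}{i-j}=4^i$ to get $\omega(T)^2=(1-4u^2)^{-1}$ with $u=qt/(1+q^2t^2)$, and recover $\omega(T)$ by the constant-term/unit argument in the integral domain $K[[t]]$ --- in effect you are rigorously justifying the closed form $\bigl(1-4t^2/(q+q^{-1})^2\bigr)^{-1/2}$ that the paper's Remark mentions but declines to use. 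Your handling of the square root is exactly right and is the point where such an argument usually goes wrong. Two small observations: (i) you quote the convolution identity without proof, whereas the paper proves its analogous identity $c_n=1$ from scratch; the two identities are of comparable depth, so a fully self-contained version of your argument would need a short proof of the convolution identity (e.g.\ the same $(1-x)^{a}(1-x)^{b}$ coefficient-comparison trick the paper uses). (ii) Your $q\mapsto q^{-1}$ symmetry for $\omega(S)$ is cleaner than the paper's ``similarly verified''; since the identity for $\omega(T)$ is established for every nonzero $q$ in a field of characteristic $0$, specializing to $q^{-1}$ is legitimate and sends $T$ to $S$ while fixing $\omega$.
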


\begin{proof}
We first verify the equation on the left in \eqref{ytterbium}. From \eqref{aaron} and Definition \ref{mendelevium}, we obtain
\begin{equation}
\omega(T) = \sum_{i=0}^{\infty} \binom{2i}{i} \left(\frac{1}{qt+q^{-1}t^{-1}}\right)^{2i}. \nonumber
\end{equation}

Recall a special case of the Newton binomial theorem \cite[Theorem 5.5.1]{combo}, which states that for any integer $a$,
$$(1+z)^a = \sum_{j=0}^{\infty}\binom{a}{j}z^j,$$
where we define

$$\binom{a}{j}=\frac{a(a-1) \cdots (a-j+1)}{j!}.$$

One readily checks that 

$$\binom{-a}{j} =  \left(-1\right)^j\binom{a+j-1}{j}.$$

We express $(1+q^2t^2)^{-1}\omega(T)$ as a power series in $t$. We have
\begin{align} 
(1+q^2t^2)^{-1}\omega(T) &= \sum_{i=0}^{\infty} \binom{2i}{i}\frac{(qt)^{2i}}{(1+q^2t^2)^{2i+1}} \nonumber \\
&= \sum_{i=0}^{\infty} \binom{2i}{i} q^{2i}t^{2i} \sum_{j=0}^{\infty} \left(-1\right)^j\binom{2i+j}{j}q^{2j}t^{2j} \nonumber \\
&= \sum_{n=0}^{\infty}\sum_{i=0}^{n}(-1)^{n-i}\binom{2i}{i}q^{2n}t^{2n}\binom{n+i}{n-i} \nonumber \\
&= \sum_{n=0}^{\infty}c_nq^{2n}t^{2n} \label{peep} \nonumber
\end{align}

where

\begin{equation} \label{cn} c_n = \sum_{i=0}^{n}\left(-1\right)^{n-i}\binom{2i}{i}\binom{n+i}{n-i}. \end{equation}

We claim that $c_n = 1$ for all $n \ge 0$. Let $n$ be given and consider the expression
\begin{equation} \label{xn} \psi(x)=\left(1-x\right)^{-n-1}\left(1-x\right)^n.\end{equation}

We will compute the coefficient of $x^n$ in two ways. First, notice that 
$$\psi(x)=\left(1-x\right)^{-n-1}(1-x)^n=(1-x)^{-1}=\sum_{i=0}^{\infty} x^i$$
so the coefficient of $x^n$ in $\psi(x)$ is equal to $1$.

\medskip
Second, we expand \eqref{xn} using the Newton binomial theorem. We have
\begin{align} 
\psi(x)=(1-x)^{-n-1}(1-x)^n &= \sum_{s=0}^{\infty}x^s\sum_{i=0}^{s}(-1)^{s+i}\binom{n+i}{i}\binom{n}{s-i}. \label{notanumber}
\end{align}
In the double sum on the right of \eqref{notanumber}, the coefficient of $x^n$ is 
\begin{align} \label{75}
\sum_{i=0}^n(-1)^{n+i}\binom{n+i}{i}\binom{n}{n-i} &= \sum_{i=0}^n(-1)^{n-i}\binom{2i}{i}\binom{n+i}{n-i}.
\end{align}

The right side of \eqref{75} matches the right side of \eqref{cn}, and therefore is equal to $c_n$. Hence, we have shown that $c_n=1$, and the claim is proven. By the claim, we have
$$(1+q^2t^2)^{-1}\omega(T)= \sum_{n=0}^{\infty}q^{2n}t^{2n}=\frac{1}{1-q^2t^2}.$$
We have verified the equation on the left in \eqref{ytterbium}. The equation on the right in \eqref{ytterbium} is similarly verified.
\end{proof}

\section{The first main result} \label{ofr}

Recall the generating functions from Definition \ref{polonium}.
In this section, we express these generating functions in the basis for $T_q$ given in Lemma \ref{iodine}. We now state our first main result.

\begin{theorem} \label{hartshorne}
In the algebra $T_q$,
\begin{align}
    w^-(t) &= \omega(t)\left(x+x^{-1}\right), \label{icoileray} \\
    w^+(t) &=
    \omega(t)\left(y+y^{-1}\right), \label{thisneedsalabel} \\
      g(t) &= \left(q^2-q^{-2}\right)\omega(t)\left(t(qx^{-1}y+qxy^{-1})-q-q^{-1}\right), \label{-1heartdove}\\
    \tilde{g}(t) &= \left(q^2-q^{-2}\right)\omega(t)\left(t(q^{-1}xy+q^{-1}x^{-1}y^{-1})-q-q^{-1}\right). \label{ialyssa}
\end{align}
\end{theorem}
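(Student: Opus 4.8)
Here I outline how I would establish Theorem \ref{hartshorne}.

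The plan is to induct on the degree in $t$, driving the induction by the recursion that defines the alternating elements of $O_q$ (\cite[Lemma 8.22]{T3}) transported to $T_q$ through $p$, and to resum that recursion into the prefactor $\omega(t)$ by means of Lemma \ref{55}. First I would halve the work using the automorphism $\tau$ and antiautomorphism $\ddagger$ of Lemmas \ref{promethium} and \ref{samarium}. From the commuting square $p\circ\sigma=\tau\circ p$ of Lemma \ref{europium} and the action of $\sigma$ on the alternating elements one gets $\tau(w^-(t))=w^+(t)$ and $\tau(g(t))=\tilde g(t)$, while $p\circ\dagger=\ddagger\circ p$ gives that $\ddagger$ fixes $w^\pm(t)$ and swaps $g(t)\leftrightarrow\tilde g(t)$. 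A direct check using Lemma \ref{promethium} and normal-ordering via Lemma \ref{lem:3facts} shows $\tau$ carries the right-hand side of \eqref{icoileray} to that of \eqref{thisneedsalabel} and the right-hand side of \eqref{-1heartdove} to that of \eqref{ialyssa} (for instance $\tau(x+x^{-1})=y+y^{-1}$ and $\tau(qx^{-1}y+qxy^{-1})=q^{-1}xy+q^{-1}x^{-1}y^{-1}$). Hence it suffices to prove \eqref{icoileray} and \eqref{-1heartdove}, after which \eqref{thisneedsalabel} and \eqref{ialyssa} follow by applying $\tau$, with $\ddagger$ available as an independent consistency check. The base case is immediate from $\omega(0)=1$: the constant terms read $w_0=x+x^{-1}$ and $g_0=-(q^2-q^{-2})(q+q^{-1})$, matching the initial values in Definitions \ref{label} and \ref{cerium}.

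The shape of each element is controlled by a single computation in the basis of Lemma \ref{iodine}: using Lemma \ref{lem:3facts} one verifies $[w_0,\,qx^{-1}y+qxy^{-1}]_q=(q^2-q^{-2})w_1$. Granting the closed forms for $w^\pm(t)$, this identity makes the first equality of \eqref{eq:9p2} collapse to $[w_0,g(t)]_q=\rho\,\omega(t)(w_0-t\,w_1)=\rho\,w^-(t)-\rho t\,w^+(t)$, since the scalar series $\omega(t)$ commutes with everything and passes through the $q$-commutator inertly; the analogous identity for $[\,\cdot\,,w_1]_q$ handles \eqref{eq:9p3}, and \eqref{eq:9p1} is checked the same way. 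These verifications confirm that the claimed expressions form a solution of the generating-function relations.

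The subtle point, which I expect to be the main obstacle, is that relations \eqref{eq:9p1}--\eqref{eq:9p3} pin down only the $x^ay^b$-part of each coefficient and are blind to its scalar part: a scalar has vanishing ordinary commutator, so the relations leave a one-parameter ambiguity in each degree and do not by themselves single out the power series $\omega(t)$. Fixing these scalars is exactly the role of the recursion \cite[Lemma 8.22]{T3}. Concretely, I would carry that recursion through $p$, note that it expresses the degree-$(k{+}1)$ coefficient as the previous data plus a geometric-type correction, and resum the result into a generating function. This is where Lemma \ref{55} becomes essential: the substitutions $t\mapsto T$ and $t\mapsto S$ of \eqref{aaron}, together with the evaluations $\omega(T)=(1+q^2t^2)/(1-q^2t^2)$ and $\omega(S)=(1+q^{-2}t^2)/(1-q^{-2}t^2)$, convert the resummed series into the rational prefactors in the theorem. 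The bookkeeping of this resummation, rather than the routine normal-ordering identities, is the crux; once the scalar factor is identified as $\omega(t)$, the induction closes.
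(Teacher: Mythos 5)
Your proposal assembles the right ingredients---the characterization in \cite[Lemma 8.22]{T3}, the substitutions $t\mapsto S,T$ from \eqref{aaron}, Lemma \ref{55}, the symmetry $\tau$, and correct spot-checks of \eqref{eq:9p1}--\eqref{eq:9p3}---but the step you yourself call the crux is never carried out, and that is a genuine gap. You observe that the commutation relations do not by themselves determine the scalar normalization of each coefficient, and you propose to fix this by ``carrying the recursion of \cite[Lemma 8.22]{T3} through $p$'' and ``resumming'' it into $\omega(t)$; but no recursion is written down, no induction hypothesis is formulated, and the resummation is only asserted to exist. As it stands, your argument shows the displayed formulas are \emph{consistent with} \eqref{eq:9p1}--\eqref{eq:9p11} and the initial conditions, not that they are the unique such solution---which is precisely the issue you raised.

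The paper closes this gap by a different mechanism, which your outline omits entirely: the missing datum is supplied not by induction but by one additional generating-function identity. By Lemmas \ref{neptunium} and \ref{lazor}, the central series $\mathcal Z(t)$ maps to the scalar $(q+q^{-1})^2$, yielding the identity \eqref{flexen} in $T_q$; and \cite[Lemma 8.22]{T3} is invoked as a \emph{uniqueness} statement, asserting that \eqref{eq:9p1}--\eqref{eq:9p11} \emph{together with} \eqref{flexen} and the conditions $w_c^-(0)=w_0$, $w_c^+(0)=w_1$ admit exactly one solution. The proof then reduces to verifying that the candidates satisfy this finite list of identities; Lemma \ref{55} enters exactly once, in evaluating $\omega(S)$ and $\omega(T)$ while checking \eqref{flexen}, not in any resummation. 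Without \eqref{flexen} (or an actually executed induction replacing it), the scalar ambiguity you flagged is never resolved. Incidentally, your stated reason for that ambiguity (``a scalar has vanishing ordinary commutator'') applies only to the bracket relations; the $q$-commutators in \eqref{eq:9p2} and \eqref{eq:9p3} do detect scalars, since $[w_0,c]_q=(q-q^{-1})cw_0$. The true residual freedom is the image of the central subalgebra generated by the $\mathcal Z_i$, and \eqref{flexen} is exactly the condition that eliminates it.
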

Theorem \ref{hartshorne} will be proven at the end of this section.

\begin{definition} 
\rm
In the algebra $\mathcal O_q$, define

$$\mathcal Z(t) = \sum_{i=0}^{\infty} \mathcal Z_i \left(q+q^{-1}\right)^it^i,$$

where $\mathcal Z_i$ is from Definition \ref{pomegranate}.
\end{definition}
\medskip
We present two lemmas about $\mathcal Z(t)$. Recall $S$, $T$ from \eqref{aaron} and the homomorphism $\gamma$ from Definition \ref{indium}. 
\begin{lemma} \label{neptunium}
The map $\gamma$ sends $\mathcal Z(t) \mapsto (q+q^{-1})^2$.
\end{lemma}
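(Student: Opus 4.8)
The plan is to recall the definition
$$\mathcal Z(t) = \sum_{i=0}^{\infty} \mathcal Z_i \left(q+q^{-1}\right)^i t^i,$$
and compute its image under $\gamma$ term by term, exploiting the fact that $\gamma$ is an algebra homomorphism and hence acts coefficientwise on the generating function (the indeterminate $t$ is a formal scalar that $\gamma$ passes through). First I would split off the $i=0$ term from the sum. By Definition \ref{pomegranate}, $\mathcal Z_0 = (q+q^{-1})^2$ is a scalar, so its $i=0$ contribution to $\mathcal Z(t)$ is simply $(q+q^{-1})^2 \cdot (q+q^{-1})^0 \cdot t^0 = (q+q^{-1})^2$, and $\gamma$ fixes this scalar.

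Next I would handle the tail $\sum_{i=1}^{\infty} \mathcal Z_i (q+q^{-1})^i t^i$. The key input is Lemma \ref{pittsburgh}, which states that $\gamma(\mathcal Z_i) = 0$ for every $i \ge 1$. Applying $\gamma$ to the series and using linearity, each term with $i \ge 1$ maps to $\gamma(\mathcal Z_i)(q+q^{-1})^i t^i = 0$. Therefore every term in the tail vanishes, and we are left with only the surviving $i=0$ scalar term, giving
$$\gamma(\mathcal Z(t)) = (q+q^{-1})^2.$$

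I expect no real obstacle here: the statement is essentially a direct consequence of Lemma \ref{pittsburgh} together with the separate bookkeeping for the $\mathcal Z_0$ term, which is included only for notational convenience. The one point worth stating carefully is the justification that $\gamma$ may be applied termwise to the infinite sum — this is the standard convention that a generating function in $t$ is treated as a formal power series with coefficients in the algebra, and an algebra homomorphism extends coefficientwise to such series while leaving $t$ untouched. With that convention in place, the computation is immediate.
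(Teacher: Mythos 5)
Your proof is correct and follows exactly the paper's argument: apply Lemma \ref{pittsburgh} to kill every term with $i \ge 1$ and observe that the $i=0$ term is the scalar $\mathcal Z_0 = (q+q^{-1})^2$ from Definition \ref{pomegranate}. The extra remark about $\gamma$ acting coefficientwise on formal power series is a reasonable bit of care that the paper leaves implicit.
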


\begin{proof}
By Lemma \ref{pittsburgh}, $\gamma(\mathcal Z_i) =0$ for $i \ge 1$. Recall from Definition \ref{pomegranate} that $\mathcal Z_0 = (q+q^{-1})^2$. The result follows.
\end{proof}

\begin{lemma}  {\rm (See \cite[Definition 8.4]{T3}.)} \label{lazor} In the algebra $\mathcal O_q$, 
\begin{equation}\begin{split} 
\mathcal Z(t) &= t^{-1}ST\mathcal W^{-}(S) \mathcal W^+(T) + tST \mathcal W^+(S) \mathcal W^-(T)  -q^2ST \mathcal W^-(S) \mathcal W^-(T) \\ & \quad -q^{-2}ST\mathcal W^+(S) \mathcal W^+(T) + (q^2-q^{-2})^{-2} \mathcal G(S) \tilde{\mathcal G} (T). \label{munoz}
\end{split}\end{equation}

\end{lemma}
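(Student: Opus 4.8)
The plan is to recognize equation \eqref{munoz} as the content of \cite[Definition 8.4]{T3} and to reconcile it with the generating function $\mathcal{Z}(t)$ introduced just above, whose coefficients are the $\phi$-images $\mathcal{Z}_i = \phi(1 \otimes z_i)$ from Definition \ref{pomegranate}. In \cite{T3} the right-hand side of \eqref{munoz} is taken as the definition of a generating function, and it is established there that its coefficients are central in $\mathcal{O}_q$ and that, under the isomorphism $\phi$ of Proposition \ref{ruthenium}, they correspond (after the normalization built into our definition) to the polynomial variables $z_i$. Thus the whole task reduces to checking that the generating function defined in the present paper agrees, coefficient by coefficient, with the one in \cite{T3}.

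To carry this out, first I would substitute the power-series expansions \eqref{ts1989} of $S$ and $T$ into the right-hand side of \eqref{munoz}, expand each of the five products $\mathcal{W}^-(S)\mathcal{W}^+(T)$, $\mathcal{W}^+(S)\mathcal{W}^-(T)$, $\mathcal{W}^-(S)\mathcal{W}^-(T)$, $\mathcal{W}^+(S)\mathcal{W}^+(T)$, and $\mathcal{G}(S)\tilde{\mathcal{G}}(T)$ as double series in $t$ with coefficients in $\mathcal{O}_q$, and collect the coefficient of $t^i$. Since $S$ and $T$ each begin at order $t$, the factor $ST$ contributes $t^2$, which compensates the explicit $t^{\pm 1}$ prefactors and makes the right-hand side a genuine power series in $t$ with no surviving negative powers. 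I would then match the coefficient of $t^i$ against $\mathcal{Z}_i(q+q^{-1})^i$, using the identification from \cite{T3} of these coefficients with the central elements $\phi(1 \otimes z_i)$; the normalization $(q+q^{-1})^i$ in the definition of $\mathcal{Z}(t)$ is exactly what is needed for the two to line up.

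As a sanity check before treating the general coefficient, I would confirm the constant terms agree. Setting $t=0$ forces $S,T \to 0$, so $\mathcal{W}^\pm$ collapse to $\mathcal{W}_0,\mathcal{W}_1$ while $\mathcal{G}(S),\tilde{\mathcal{G}}(T)$ collapse to $\mathcal{G}_0,\tilde{\mathcal{G}}_0$. The four $\mathcal{W}$-products each carry a surplus power of $t$ and so vanish at $t=0$, whereas the last term evaluates, using $q^2-q^{-2}=(q-q^{-1})(q+q^{-1})$ together with $\mathcal{G}_0 = \tilde{\mathcal{G}}_0 = -(q-q^{-1})(q+q^{-1})^2$, to $(q+q^{-1})^2 = \mathcal{Z}_0$, which matches the constant term of our $\mathcal{Z}(t)$.

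The hard part will not be conceptual but the bookkeeping: tracking the $(q+q^{-1})$-normalizations through the products of the series $S$ and $T$, and aligning our conventions for $S$, $T$, the scaling $(q+q^{-1})^i$, and $\mathcal{Z}_0$ with those of \cite{T3}, so that the coefficient-by-coefficient comparison goes through cleanly. Once the conventions are matched, the identity is precisely \cite[Definition 8.4]{T3}.
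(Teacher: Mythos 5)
Your proposal is correct and follows essentially the same route as the paper: both treat the right-hand side of \eqref{munoz} as a power series in $t$ (after expanding $S$ and $T$ via \eqref{ts1989} and noting the $ST$ factor absorbs the $t^{-1}$), and both then identify its coefficients with $\mathcal Z_i(q+q^{-1})^i$ by appealing to the results of \cite{T3} (the paper cites Lemma 9.3 there for $i\ge 1$ and Lemma 8.18 for the constant term, which you instead verify directly).
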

\begin{proof}
Let $\mathcal{Z}^c(t)$ denote the expression on the right side of equation \eqref{munoz}. We expand $S$ and $T$ using the power series in \eqref{ts1989}. These expansions each have zero constant term, so $t^{-1}ST$ can be written as a power series in $t$. Hence $\mathcal Z^c(t)$ is a power series, which we express as
$$\mathcal{Z}^c(t) = \sum_{i=0}^{\infty} \mathcal{Z}^c_i(q+q^{-1})^it^i.$$
For $i \ge 1$, the map $\phi$ sends $1 \otimes z_i \mapsto \mathcal Z^c_i $ by \cite[Lemma 9.3]{T3}, so $\mathcal Z^c_i = \mathcal Z_i$. By \cite[Lemma 8.18]{T3}, we have $\mathcal Z_0^c = (q+q^{-1})^2 = \mathcal Z_0$.  By these observations, $\mathcal Z^c(t) = \mathcal Z(t)$ and the result follows.
\end{proof}

\begin{corollary} \label{milwaukee}
In the algebra $O_q$,
\begin{equation} \begin{split} 
\hspace{-0.20in} \left(q+q^{-1}\right)^2 &= t^{-1}ST W^{-}(S)  W^+(T) + tST  W^+(S)  W^-(T)  -q^2ST  W^-(S) W^-(T) \\ & \quad -q^{-2}ST W^+(S)  W^+(T) + \left(q^2-q^{-2}\right)^{-2} G(S) \tilde{G} (T).\label{76} \end{split} \end{equation}
\end{corollary}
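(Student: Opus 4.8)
The plan is to obtain \eqref{76} simply by applying the algebra homomorphism $\gamma$ from Definition \ref{indium} to both sides of the identity \eqref{munoz} in Lemma \ref{lazor}. There is essentially no new content beyond tracking how $\gamma$ interacts with the power-series machinery, so the work is bookkeeping rather than computation.

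First I would dispatch the left side: by Lemma \ref{neptunium}, $\gamma$ sends $\mathcal{Z}(t) \mapsto (q+q^{-1})^2$, which is exactly the left side of \eqref{76}. For the right side, the key observation is that $S$, $T$, $t^{-1}ST$, and $tST$ are power series in $t$ with coefficients in $K$; indeed, as recalled in the proof of Lemma \ref{lazor}, the expansions of $S$ and $T$ in \eqref{ts1989} have zero constant term, so $t^{-1}ST$ is again a power series of this kind. Since $\gamma$ is $K$-linear, it commutes with multiplication by each of these scalar series and with the scalar coefficients $-q^2$, $-q^{-2}$, and $(q^2-q^{-2})^{-2}$.

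Next, because $\gamma$ is moreover an algebra homomorphism, it sends a product of two power series in $t$ (with coefficients in $\mathcal{O}_q$) to the product of their images, since applying $\gamma$ coefficient-wise preserves the Cauchy product. It then remains to compute the image of each factor. As $\gamma$ sends $\mathcal{W}^-(t) \mapsto W^-(t)$ and is $K$-linear, substituting the scalar series $S$ for $t$ and applying $\gamma$ gives $\gamma(\mathcal{W}^-(S)) = W^-(S)$, and the analogous identities hold for $\mathcal{W}^+(S)$, $\mathcal{W}^-(T)$, $\mathcal{W}^+(T)$, $\mathcal{G}(S)$, and $\tilde{\mathcal{G}}(T)$ by the table giving the $\gamma$-images of the generating functions in \eqref{xthallium}.

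Assembling these pieces term by term turns the right side of \eqref{munoz} into the right side of \eqref{76}, completing the argument. The only point genuinely requiring care, and the closest thing to an obstacle, is verifying that $\gamma$ commutes with the formation of the composite scalar series $S$, $T$, $t^{-1}ST$ and with the Cauchy products $\mathcal{W}^-(S)\mathcal{W}^+(T)$, $\mathcal{G}(S)\tilde{\mathcal{G}}(T)$, and their analogues. This is immediate once one recalls that $\gamma$ is a $K$-linear algebra homomorphism acting coefficient-wise on power series, so no analytic or convergence issue arises.
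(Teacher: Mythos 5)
Your proposal is correct and is exactly the paper's argument: the paper proves Corollary \ref{milwaukee} as a "routine consequence of Lemmas \ref{neptunium} and \ref{lazor}," i.e.\ by applying $\gamma$ to both sides of \eqref{munoz} and using $\gamma(\mathcal{Z}(t)) = (q+q^{-1})^2$ together with the table of $\gamma$-images of the generating functions. Your additional care about $\gamma$ acting coefficient-wise on the scalar series $S$, $T$, $t^{-1}ST$ and on Cauchy products just makes explicit what the paper leaves implicit.
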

\begin{proof}
Routine consequence of Lemmas \ref{neptunium} and \ref{lazor}.
\end{proof}

\begin{corollary} \label{americium}
In the algebra $T_q$,
\begin{equation} \begin{split}
\left(q+q^{-1}\right)^2 &= t^{-1}STw^{-}(S)w^+(T) + tSTw^+(S)w^-(T) -q^2STw^-(S)w^-(T) \\ 
& \quad -q^{-2}STw^+(S)w^+(T) + \left(q^2-q^{-2}\right)^{-2}g(S)\tilde{g}(T). \label{flexen}
\end{split}\end{equation}
\end{corollary}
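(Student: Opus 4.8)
The plan is to obtain \eqref{flexen} by applying the algebra homomorphism $p: O_q \to T_q$ of Proposition \ref{p} to equation \eqref{76} of Corollary \ref{milwaukee}, which is the corresponding identity already established in $O_q$. So the substantive work has been done upstream, in passing from $\mathcal O_q$ to $O_q$ via $\gamma$ (Lemmas \ref{neptunium} and \ref{lazor}); here I only need to verify that $p$ carries each term of \eqref{76} to the matching term of \eqref{flexen}.

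First I would recall that $p$ sends $W^-(t) \mapsto w^-(t)$, $W^+(t) \mapsto w^+(t)$, $G(t) \mapsto g(t)$, and $\tilde G(t) \mapsto \tilde g(t)$, as recorded in the table following Definition \ref{cerium}. Since $S$ and $T$ from \eqref{aaron} are formal power series in $t$ with coefficients in $K$, and since $p$ is $K$-linear, I would note that $p$ commutes with the substitutions $t \mapsto S$ and $t \mapsto T$: for instance $p(W^-(S)) = p\bigl(\sum_i W_{-i} S^i\bigr) = \sum_i p(W_{-i}) S^i = \sum_i w_{-i} S^i = w^-(S)$, and likewise for $W^+(T)$, $G(S)$, $\tilde G(T)$. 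Because $p$ is an algebra homomorphism, it then turns each product of generating functions in \eqref{76} into the product of the corresponding images, e.g. $p(W^-(S)\, W^+(T)) = w^-(S)\, w^+(T)$. The scalar prefactors $t^{-1}ST$, $tST$, $q^2 ST$, $q^{-2}ST$, and $(q^2-q^{-2})^{-2}$ lie in the power series ring over $K$ and so are untouched by $p$; and since $p$ is unital, the left-hand side $(q+q^{-1})^2 = (q+q^{-1})^2 \cdot 1$ is fixed. Assembling these facts, the $p$-image of \eqref{76} is exactly \eqref{flexen}.

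I do not anticipate a genuine obstacle, as the statement is a direct pushforward of Corollary \ref{milwaukee} along $p$. The only point that deserves explicit mention is the interchange of $p$ with the formal substitutions $t \mapsto S, T$ and with the products of generating functions; this is justified by the $K$-linearity of $p$ together with the fact, noted in the proof of Lemma \ref{lazor}, that $S$ and $T$ have zero constant term, so that $t^{-1}ST$ is again a power series and every extraction of the coefficient of a given power of $t$ involves only finitely many terms.
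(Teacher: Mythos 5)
Your proposal is correct and matches the paper's proof, which simply applies $p$ to both sides of equation \eqref{76}; your additional remarks about $p$ commuting with the formal substitutions $t \mapsto S, T$ and respecting products of generating functions are the natural justification the paper leaves implicit. Nothing further is needed.
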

\begin{proof}
Apply $p$ to both sides of equation \eqref{76}.
\end{proof}

\begin{proof}[Proof of Theorem \ref{hartshorne}]

By \cite[Lemma 8.22]{T3}, there is a unique set of generating functions in $\mathcal O_q$,
\begin{equation} \label{useyourillusion1} \mathcal W_c^{-}(t), \qquad \mathcal W_c^{+}(t), \qquad \mathcal G_c(t), \qquad \tilde{\mathcal G}_c(t) \end{equation}
that satisfy equations \eqref{eq:4p1}--\eqref{eq:4p11}, equation \eqref{munoz}, and the conditions
\begin{equation} \label{bethebird} \mathcal W_c^-(0) = \mathcal W_0, \qquad \mathcal W_c^+(0) = \mathcal W_1. \end{equation}
Applying $\rho \circ \gamma$ to \eqref{useyourillusion1} and \eqref{bethebird}, we see that there is a unique set of generating functions in $T_q$,
\begin{equation} \label{useyourillusion2}
w_c^{-}(t), \qquad w_c^{+}(t), \qquad  g_c(t), \qquad \tilde{g}_c(t), \nonumber \end{equation}
that satisfy equations \eqref{eq:9p1}--\eqref{eq:9p11}, equation \eqref{flexen}, and the conditions

\begin{equation} \label{countmein} w_c^-(0) = w_0, \qquad  w_c^+(0) =  w_1.\end{equation}   

We are going to display this unique solution. Going forward, we define
\begin{equation}w_c^-(t), \qquad w_c^+(t), \qquad g_c(t), \qquad \tilde{g}_c(t)\nonumber \end{equation}
to be the right-hand sides of equations $\eqref{icoileray}$--$\eqref{ialyssa}$, in that order.

Since $\omega(0)=1$, these candidates satisfy \eqref{countmein}. Therefore, it suffices to show that these candidates satisfy equations \eqref{flexen} and \eqref{eq:9p1}--\eqref{eq:9p11}. 

We begin with \eqref{flexen}. We claim that in the algebra $T_q$,
\begin{equation}\begin{split}
 \left(q+q^{-1}\right)^2 &= t^{-1}STw_c^{-}(S)w_c^+(T) + tSTw_c^+(S)w_c^-(T)  -q^2STw_c^-(S)w_c^-(T) \\ 
& \quad -q^{-2}STw_c^+(S)w_c^+(T) + \left(q^2-q^{-2}\right)^{-2}g_c(S)\tilde{g}_c(T). \label{avva}
\end{split}\end{equation}
To show this, we evaluate equations \eqref{icoileray}--\eqref{ialyssa}  using \eqref{aaron} and \eqref{ytterbium} to obtain the following equations:
\begin{align}
w_c^-(S) &=  \frac{1+q^{-2}t^2}{1-q^{-2}t^2}\left(x+x^{-1}\right), \label{eightyfive22} \\
w_c^+(S) &=  \frac{1+q^{-2}t^2}{1-q^{-2}t^2}\left(y+y^{-1}\right), \label{cplus} \\
w_c^-(T) &=  \frac{1+q^2t^2}{1-q^2t^2}\left(x+x^{-1}\right), \label{cminus}\\
w_c^+(T) &= \frac{1+q^2t^2}{1-q^2t^2}\left(y+y^{-1}\right), \label{fuqkthesystem} \\
g_c(S) &=  \frac{(q+q^{-1})^2(q-q^{-1})(1+q^{-2}t^2)}{(1-q^{-2}t^2)(qt^{-1}+q^{-1}t)}\left(qxy^{-1}+qx^{-1}y-qt^{-1}-q^{-1}t\right), \label{gc} \\
\tilde{g}_c(T) &= \frac{(q+q^{-1})^2(q-q^{-1})(1+q^{2}t^2)}{(1-q^{2}t^2)(qt+q^{-1}t^{-1})}\left(q^{-1}xy+q^{-1}x^{-1}y^{-1}-qt-q^{-1}t^{-1}\right). \label{ninety32}
\end{align}

To verify \eqref{avva}, we evaluate the expression on its right using equations \eqref{eightyfive22}--\eqref{ninety32}. We have now verified \eqref{avva}, and with it \eqref{flexen}. The equations \eqref{eq:9p1}--\eqref{eq:9p11} can be verified in a straightforward manner. The reader is welcome to see Appendix A for more details.

We have now verified that the candidate functions satisfy \eqref{eq:9p1}--\eqref{eq:9p11} and \eqref{flexen}, so the proof is complete.
\end{proof}

\section{The second main result}

In this section we use the formulas in Theorem \ref{hartshorne} to express the alternating elements of $T_q$
in the basis for $T_q$ given in Lemma \ref{iodine}.

Throughout this section, $j$ is a natural number. The following is our second main result.

\begin{theorem} \label{halfmarathon}

In the table below, we express the alternating elements of $T_q$ in the basis for $T_q$ from Lemma \ref{iodine}. For $k \in \mathbb{N}$, 

\medskip
\begin{tabular}{c |c | c}
 & Case $k=2j$ & Case $k=2j+1$ \\
 \hline
$w_{k+1}$     &  $\binom{2j}{j}\left(q+q^{-1}\right)^{-2j}\left(y+y^{-1}\right)$ & $0$ \\
$w_{-k}$ & $\binom{2j}{j}\left(q+q^{-1}\right)^{-2j}\left(x+x^{-1}\right)$ & $0$ \\
$g_k$ & $-\binom{2j}{j}\left(q-q^{-1}\right)\left(q+q^{-1}\right)^{2-2j}$ & $\binom{2j}{j}q\left(q-q^{-1}\right)\left(q+q^{-1}\right)^{1-2j}\left(xy^{-1}+x^{-1}y\right)$ \\
$\tilde{g}_k$ & $-\binom{2j}{j}\left(q-q^{-1}\right)\left(q+q^{-1}\right)^{2-2j}$ & $\binom{2j}{j}q^{-1}\left(q-q^{-1}\right)\left(q+q^{-1}\right)^{1-2j}\left(xy+x^{-1}y^{-1}\right)$ \\
\end{tabular}

\end{theorem}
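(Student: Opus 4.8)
The plan is to obtain each table entry by extracting a single coefficient of $t^k$ from the closed forms in Theorem \ref{hartshorne}. By Definition \ref{polonium}, the elements $w_{-k}$, $w_{k+1}$, $g_k$, and $\tilde g_k$ are precisely the coefficients of $t^k$ in $w^-(t)$, $w^+(t)$, $g(t)$, and $\tilde g(t)$, respectively. So the entire argument reduces to reading off Taylor coefficients from the right-hand sides of \eqref{icoileray}--\eqref{ialyssa}, and the crucial input is the explicit expansion $\omega(t)=\sum_{i=0}^{\infty}\binom{2i}{i}(q+q^{-1})^{-2i}t^{2i}$ from Definition \ref{mendelevium}. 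The key structural observation is that $\omega(t)$ is an \emph{even} power series: its coefficient of $t^{2j}$ is $\binom{2j}{j}(q+q^{-1})^{-2j}$, while its coefficient of every odd power of $t$ vanishes. This single fact forces the parity split $k=2j$ versus $k=2j+1$ seen throughout the table.

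For the rows $w_{k+1}$ and $w_{-k}$, I would simply multiply the even series $\omega(t)$ by the basis sums $y+y^{-1}$ and $x+x^{-1}$ and take the coefficient of $t^k$. The even coefficient $\binom{2j}{j}(q+q^{-1})^{-2j}$ produces the $k=2j$ column, while the vanishing of the odd coefficients produces the zeros in the $k=2j+1$ column.

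For the rows $g_k$ and $\tilde g_k$ I would exploit the fact that each formula is a sum of a bracket-constant term and a $t$-linear term. Concretely, I would rewrite \eqref{-1heartdove} as $g(t)=-(q^2-q^{-2})(q+q^{-1})\,\omega(t)+(q^2-q^{-2})q\,(x^{-1}y+xy^{-1})\,t\,\omega(t)$. The first summand is even in $t$ and the second is odd in $t$, so the two contributions never interfere: the even summand supplies the entire $k=2j$ entry and the odd summand the entire $k=2j+1$ entry. Using $(q^2-q^{-2})=(q-q^{-1})(q+q^{-1})$, the even part simplifies to $-\binom{2j}{j}(q-q^{-1})(q+q^{-1})^{2-2j}$, and since $t\,\omega(t)$ has coefficient $\binom{2j}{j}(q+q^{-1})^{-2j}$ at $t^{2j+1}$, the odd part simplifies to $\binom{2j}{j}q(q-q^{-1})(q+q^{-1})^{1-2j}(x^{-1}y+xy^{-1})$, matching the table. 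The computation for $\tilde g(t)$ from \eqref{ialyssa} is identical, with $q\,(x^{-1}y+xy^{-1})$ replaced by $q^{-1}(xy+x^{-1}y^{-1})$.

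There is no genuine obstacle here; it is a direct coefficient extraction once Theorem \ref{hartshorne} is in hand. The only points requiring care are the parity bookkeeping in the even/odd splitting for $g$ and $\tilde g$, and the routine scalar simplifications such as $(q^2-q^{-2})(q+q^{-1})=(q-q^{-1})(q+q^{-1})^2$ and $(q^2-q^{-2})q^{\pm1}=q^{\pm1}(q-q^{-1})(q+q^{-1})$, which must be tracked to land exactly on the displayed entries.
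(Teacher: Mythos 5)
Your proposal is correct and follows essentially the same route as the paper: both arguments expand $\omega(t)$ in the closed forms of Theorem \ref{hartshorne} and read off the coefficient of $t^k$, with the evenness of $\omega(t)$ producing the parity split in the table. Your explicit even/odd decomposition of $g(t)$ and $\tilde g(t)$ is just a slightly more articulated version of the paper's ``examine the coefficient of $t^k$'' step, and the scalar simplifications you flag are exactly the ones needed.
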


\begin{proof}
We begin with $\tilde{g}_k$. Expanding $\omega(t)$ in the right side of \eqref{ialyssa}, we have
\begin{equation} \label{x2} \tilde{g}(t) = \left(q^2-q^{-2}\right)\left(t(q^{-1}xy+q^{-1}x^{-1}y^{-1})- (q+q^{-1})\right)\sum_{i=0}^{\infty} \binom{2i}{i}\frac{t^{2i}}{(q+q^{-1})^{2i}}. \end{equation}

We examine the coefficient of $t^k$ in \eqref{x2} and obtain the following equations:
\begin{align} 
\tilde{g}_{k} = -\binom{2j}{j}\left(q-q^{-1}\right)\left(q+q^{-1}\right)^{2-2j} \hspace{1.385in} & \text{ for } k=2j , \label{1} \\
\tilde{g}_{k} = \binom{2j}{j} \left(q-q^{-1}\right)\left(q+q^{-1}\right)^{1-2j} \left(q^{-1}xy+q^{-1}x^{-1}y^{-1}\right) & \text{ for } k=2j+1. \label{2}
 \end{align}

Expanding $\omega(t)$ in the right side of  \eqref{-1heartdove}, we have
\begin{equation} \label{y2} g(t) = \left(q^2-q^{-2}\right)\left(t(qxy^{-1}+qx^{-1}y)-(q+q^{-1})\right)\sum_{i=0}^{\infty} \binom{2i}{i}\frac{t^{2i}}{(q+q^{-1})^{2i}}. \end{equation}

We examine the coefficient of $t^k$ in \eqref{y2} and obtain the following equations:
\begin{align}
g_k = -\binom{2j}{j}\left(q-q^{-1}\right)\left(q+q^{-1}\right)^{2-2j} \hspace{1.185in} & \text{ for } k=2j, \\
g_k = \binom{2j}{j} \left(q-q^{-1}\right)\left(q+q^{-1}\right)^{1-2j} \left(qxy^{-1}+qx^{-1}y\right) \hspace{.10295in} &\text{ for } k=2j+1.
\end{align}

Expanding $\omega(t)$ in the right side of $\eqref{thisneedsalabel}$, we have
\begin{equation} \label{x3} w^+(t) = \left(y+y^{-1}\right)\sum_{i=0}^{\infty}\binom{2i}{i}\frac{t^{2i}}{(q+q^{-1})^{2i}}.\end{equation}

We examine the coefficient of $t^k$ in \eqref{x3} and obtain the following equations:
\begin{align}
w_{k+1} = \binom{2j}{j}\left(q+q^{-1}\right)^{-2j}\left(y+y^{-1}\right) \hspace{1.01in} &\text{ for } k=2j, \\
w_{k+1} = 0 \hspace{2.95in} &\text{ for } k=2j+1.
\end{align}

Expanding $\omega(t)$ in the right side of $\eqref{icoileray}$, we have
\begin{equation} \label{y3} w^-(t) = \left(x+x^{-1}\right)\sum_{i=0}^{\infty}\binom{2i}{i}\frac{t^{2i}}{(q+q^{-1})^{2i}}.\end{equation}

We examine the coefficient of $t^k$ in \eqref{y3} and obtain the following equations:
\begin{align}
w_{-k} = \binom{2j}{j}\left(q+q^{-1}\right)^{-2j}\left(x+x^{-1}\right) \hspace{1.01in} &\text{ for } k=2j, \\
w_{-k} = 0 \hspace{2.962in} &\text{ for } k=2j+1.
\end{align}
\end{proof}
\medskip


We end this section with a comment. From the table given in Theorem \ref{halfmarathon}, we see that each alternating element of $T_q$ is a linear combination of the following nine elements:
$$1, \quad x, \quad y, \quad x^{-1}, \quad y^{-1}, \quad xy, \quad xy^{-1}, \quad x^{-1}y, \quad x^{-1}y^{-1}.$$

The above elements are linearly independent. Therefore,
the alternating elements of $T_q$ are contained in a 9-dimensional
subspace of $T_q$.


\newpage

\section*{Appendix A: Details from the proof of Theorem \ref{hartshorne}}
This appendix clarifies some details regarding the proof of Theorem \ref{hartshorne}. We seek to show that the generating functions in \eqref{icoileray}--\eqref{ialyssa} satisfy relations \eqref{eq:9p1}--\eqref{eq:9p11}. 

To simplify our arguments, we use the fact that the automorphism $\tau$ of $T_q$ sends
$$w^+(t) \mapsto w^-(t), \qquad w^-(t) \mapsto w^+(t), \qquad g(t) \mapsto \tilde{g}(t), \qquad \tilde{g}(t) \mapsto g(t).$$
We consider \eqref{eq:9p1}--\eqref{eq:9p11} individually.

\begin{itemize}
\item We show that $ \lbrack w_0, w^+(t)\rbrack= 
\lbrack w^-(t), w_{1} \rbrack.$

By Definition \ref{label}, we have \begin{equation} \begin{split}[w_0,w^+(t)] &= [w_0,\omega(t)(y+y^{-1})]=\omega(t)[x+x^{-1},y+y^{-1}] \\ &= [\omega(t)(x+x^{-1}),w_1]=[w^-(t),w_1]. \nonumber \end{split} \end{equation}

\item We show that $\lbrack w^-(t), w_{1}\rbrack=
\frac{{{\tilde g}(t)} - g(t)}{t(q+q^{-1})}.$

We have
\begin{align} t^{-1}\frac{\tilde{g}(t)-g(t)}{q+q^{-1}} &= t^{-1}\frac{(q^2-q^{-2})t\omega(t)(q^{-1}xy+q^{-1}x^{-1}y^{-1}-qxy^{-1}-qx^{-1}y)}{q+q^{-1}} \nonumber \\
&= (q-q^{-1})\omega(t)(q^{-1}xy+q^{-1}x^{-1}y^{-1}-qxy^{-1}-qx^{-1}y) \nonumber \\
&= \omega(t)[w_0,w_1]  \nonumber \\ &= [w^-(t),w_1]. \nonumber
\end{align}

\item We show that $\lbrack w_0, g(t)\rbrack_q= 
\rho   w^-(t)-\rho t
w^+(t).$

We have
\begin{align}
[w_0,g(t)]_q &= qw_0g(t)-q^{-1}g(t)w_0 \nonumber \\ &= \left(q^2-q^{-2}\right)\omega(t)\left(q(x+x^{-1})(tqxy^{-1}+tqx^{-1}y-q-q^{-1}) \right.\nonumber \\ & \left. \quad -q^{-1}(tqxy^{-1}+tqx^{-1}y-q-q^{-1})(x+x^{-1})\right) \nonumber \\
&=\left(q^2-q^{-2}\right)\omega(t)\left((tq^2y+tq^2x^{-2}y+tq^2y^{-1}+tq^2x^2y^{-1} \right. \nonumber \\ & \quad -q^2x-q^2x^{-1}-x-x^{-1}) - \left.(tq^{-2}y+tq^{-2}y^{-1} \right. \nonumber \\ & \quad \left. +  tq^2x^{-2}y+tq^2x^2y^{-1}-x-x^{-1}-q^{-2}x-q^{-2}x^{-1})\right) \nonumber \\
&= \left(q^2-q^{-2}\right)\omega(t)\left(q^2-q^{-2}\right)\left(ty+ty^{-1}-x-x^{-1}\right) \nonumber \\
&= -\left(q^2-q^{-2}\right)^2 \left(w^-(t)-tw^+(t)\right) \nonumber \\ &= \rho w^-(t) - \rho t w^+(t).  \nonumber 
\end{align}

\item We show that  $ 
\lbrack  w_{1}, { {\tilde g}}(t)\rbrack_q= 
\rho   w^+(t)-\rho t
  w^-(t).$

Apply the $\tau$ automorphism to the above equation.

\item We show that $\lbrack {{\tilde g}(t)}, w_{0}\rbrack_q= 
\rho   w^-(t)-\rho t
w^+(t).$

We have
\begin{align}
[\tilde{g}(t),w_0]_q &= q\tilde{g}(t)w_0-q^{-1}w_0\tilde{g}(t) \nonumber \\ &= \left(q^2-q^{-2}\right)\omega(t)\left(q(tq^{-1}xy+tq^{-1}x^{-1}y^{-1}-q-q^{-1})(x+x^{-1})\right.\nonumber \\ & \left. \quad -q^{-1}(x+x^{-1})(tq^{-1}xy+tq^{-1}x^{-1}y^{-1}-q-q^{-1})\right) \nonumber \\
&=\left(q^2-q^{-2}\right)\omega(t)\left((tq^2y+tq^{-2}x^{2}y+tq^2y^{-1}+tq^{-2}x^{-2}y^{-1} \right.\nonumber \\ & \left. \quad -q^2x-q^2x^{-1}-x-x^{-1} )-(tq^{-2}y+tq^{-2}y^{-1} \right. \nonumber \\ & \left. \quad + tq^{-2}x^{-2}y^{-1}+tq^{-2}x^2y-x-x^{-1}-q^{-2}x-q^{-2}x^{-1})\right) \nonumber \\
&= \left(q^2-q^{-2}\right)\omega(t)\left(q^2-q^{-2}\right)\left(ty+ty^{-1}-x-x^{-1}\right) \nonumber \\
&= -\left(q^2-q^{-2}\right)^2 \left(w^-(t)-tw^+(t)\right) \nonumber \\ &= \rho w^-(t) - \rho t w^+(t).  \nonumber
\end{align}

\item We show that  $\lbrack  g(t),  w_{1}\rbrack_q= 
\rho   w^+(t)-\rho t
  w^-(t).$

Apply the $\tau$ automorphism to the above equation.

\item We show that 
$\lbrack  w^-(s),  w^-(t)\rbrack=0.$

We have $$[w^-(s),w^-(t)] = \omega(s)\omega(t)[x+x^{-1},x+x^{-1}]=0.$$ 

\item We show that
$\lbrack  w^+(s),  w^+(t)\rbrack= 0.$ 

We have $$[w^+(s),w^+(t)] = \omega(s)\omega(t)[y+y^{-1},y+y^{-1}]=0.$$

\item We show that $\lbrack  w^-(s),  w^+(t)\rbrack+
\lbrack  w^+(s),  w^-(t)\rbrack= 0.$

We have \begin{align}[w^-(s),w^+(t)] = \omega(s)\omega(t)[x+x^{-1},y+y^{-1}] &= -\omega(s)\omega(t)[y+y^{-1},x+x^{-1}] \nonumber \\ &=-[w^+(s),w^-(t)]. \nonumber\end{align}

\item We show that 
$s\lbrack  w^-(s),  g(t)\rbrack+t
\lbrack  g(s),  w^-(t)\rbrack= 0.$

We have
\begin{align} s[w^-(s),g(t)] &= (q^2-q^{-2})s\omega(s)\omega(t)\left[x+x^{-1},t(qxy^{-1}+qx^{-1}y)-q-q^{-1}\right] \nonumber \\ &= (q^2-q^{-2})st\omega(s)\omega(t)[x+x^{-1},qxy^{-1}+qx^{-1}y]  \nonumber \\ &= -(q^2-q^{-2})t\omega(t)\omega(s)\left[s(qxy^{-1}+qx^{-1}y)-q-q^{-1},x+x^{-1}\right]  \nonumber \\ &= -t[g(s),w^-(t)].  \nonumber\end{align}

\item We show that 
$s\lbrack  w^+(s),  \tilde{g}(t)\rbrack+t
\lbrack  \tilde{g}(s),  w^+(t)\rbrack= 0.$

Apply $\tau$ to each side of the previous equation.

\item We show that 
$s\lbrack  w^-(s),  \tilde{g}(t)\rbrack+t
\lbrack  \tilde{g}(s),  w^-(t)\rbrack= 0.$

We have
\begin{align} s[w^-(s),\tilde{g}(t)] &= (q^2-q^{-2})s\omega(s)\omega(t)\left[x+x^{-1},t(q^{-1}xy+q^{-1}x^{-1}y^{-1})-q-q^{-1}\right]  \nonumber \\ &= (q^2-q^{-2})st\omega(s)\omega(t)[x+x^{-1},q^{-1}xy+q^{-1}x^{-1}y^{-1}]  \nonumber \\ &= -(q^2-q^{-2})t\omega(t)\omega(s)[s(q^{-1}xy+q^{-1}x^{-1}y^{-1})-q-q^{-1},x+x^{-1}]  \nonumber \\ &= -t[\tilde{g}(s),w^-(t)].  \nonumber \end{align}

\item We show that 
$s\lbrack  w^+(s),  {g}(t)\rbrack+t
\lbrack  {g}(s),  w^+(t)\rbrack= 0.$
Apply $\tau$ to each side of the previous equation.

\item We show that  $[g(s),g(t)]=0.$

We have \begin{align}
[g(s),g(t)] &= (q^2-q^{-2})^2\omega(s)\omega(t)\left[s(qxy^{-1}+qx^{-1}y)-q-q^{-1}, \right. \nonumber \\ &\left. \quad t(qxy^{-1}+qx^{-1}y)-q-q^{-1}\right] \nonumber \\
&= (q^2-q^{-2})^2st\omega(s)\omega(t)[qxy^{-1}+qx^{-1}y,qxy^{-1}+qx^{-1}y] \nonumber \\ &= 0. \nonumber 
\end{align}

\item We show that  $[\tilde{g}(s),\tilde{g}(t)]=0.$

Apply $\tau$ to each side of the previous equation.

\item We show that  $[\tilde{g}(s),g(t)]+[g(s),\tilde{g}(t)]=0.$

We have
\begin{align}
[\tilde{g}(s),g(t)] &= (q^2-q^{-2})^2\omega(s)\omega(t)\left[s(q^{-1}xy+q^{-1}x^{-1}y^{-1})-q-q^{-1}, \right.\nonumber \\ & \left.  \quad t(qxy^{-1}+qx^{-1}y)-q-q^{-1} \right] \nonumber \\
&=(q^2-q^{-2})^2st\omega(s)\omega(t)[q^{-1}xy+q^{-1}x^{-1}y^{-1},qxy^{-1}+qx^{-1}y] \nonumber \\
&=-(q^2-q^{-2})^2\omega(s)\omega(t)\left[s(qxy^{-1}+qx^{-1}y)-q-q^{-1}, \right.\nonumber \\ & \left. 
 \quad t(q^{-1}xy+q^{-1}x^{-1}y^{-1})-q-q^{-1}\right] \nonumber \\
&=-[g(s),\tilde{g}(t)]. \nonumber
\end{align}

We have now verified the equations \eqref{eq:9p1}--\eqref{eq:9p11}, so the argument is complete.
\end{itemize}

\section*{Acknowledgement}
The author is presently a graduate student at the University of Wisconsin--Madison.
He is deeply grateful to his research advisor, Professor Paul Terwilliger, for many of the resources used and for insightful comments to improve the clarity and flow of this paper. 

\noindent Owen Goff \hfil\break
\noindent Department of Mathematics \hfil\break
\noindent University of Wisconsin \hfil\break
\noindent 480 Lincoln Drive \hfil\break
\noindent Madison, WI 53706-1388 USA \hfil\break
\noindent Email: {\tt ogoff@wisc.edu }\hfil\break
\end{document}